   \definecolor{labelkey}{gray}{.8}
   \definecolor{refkey}{gray}{.8}
\providecommand{\R}{\mathbb{R}}
\providecommand{\osc}{\operatorname{osc}}
\newcommand{\e}{\varepsilon}
\newcommand{\step}[1]{\medskip\noindent\textbf{Step #1. }}
\newcommand{\ignore}[1]{}
\newtheorem{definition}{Definition}
\newtheorem{proposition}{Proposition}
\newtheorem{theorem}{Theorem}
\newtheorem{remark}{Remark}
\newtheorem{lemma}{Lemma}
\newtheorem{corollary}{Corollary}
\newtheorem{assumption}[theorem]{Assumption}
\author[P.~Bella]{Peter Bella}
\address{Mathematisches Institut 
 Universit\"at Leipzig\\
 Leipzig, 04103 Germany.}
\email{bella@math.uni-leipzig.de}
\author[M. Sch\"affner]{Mathias Sch\"affner}
\address{Mathematisches Institut 
 Universit\"at Leipzig\\
 Leipzig, 04103 Germany.}
\email{schaeffner@math.uni-leipzig.de}
\title[Regularity for non-uniformly elliptic equations]{Local boundedness and Harnack inequality for solutions of linear non-uniformly elliptic equations} 
\begin{document}
\maketitle

%\centerline{\LARGE{NOT FOR DISTRIBUTION}}

\begin{abstract}

We study local regularity properties for solutions of linear, non-uniformly elliptic equations. Assuming certain integrability conditions on the coefficient field, we prove local boundedness and Harnack inequality. The assumed integrability assumptions are essentially sharp and improve  upon classical results by Trudinger [ARMA 1971]. We then apply the deterministic regularity results to the corrector equation in stochastic homogenization and establish sublinearity of the corrector.

\medskip

\noindent
{\bf Keywords:} Harnack inequality, Local boundedness, non-uniformly elliptic equations.
\end{abstract}

\section{Introduction and main results}

We consider linear, second order, scalar elliptic equations in divergence form,
\begin{equation}\label{eq}
 -\nabla \cdot a \nabla u = 0,
\end{equation}
where $a:\Omega\to\R^{d\times d}$ is a measurable matrix field on a domain $\Omega\subset\R^d$, $d\geq2$. In order to measure ellipticity of $a$, we introduce
\begin{equation}\label{def:lmu}
 \lambda(x):=\inf_{\xi\in\R^d} \frac{\xi\cdot a(x)\xi}{|\xi|^2},\qquad \mu(x):=\sup_{\xi\in\R^d}\frac{|a(x)\xi|^2}{\xi\cdot a(x)\xi}
\end{equation}
and suppose that $\lambda$ and $\mu$ are measurable non-negative functions. If $\lambda^{-1}$ and $\mu$ are essentially bounded (i.e.\ $a$ is uniformly elliptic), the seminal contributions of DeGiorgi \cite{DG57} and Nash \cite{Nash58} ensure that weak solutions of \eqref{eq} are H\"older continuous. Moreover, Moser \cite{Moser60,Moser61} showed that weak solutions of \eqref{eq} satisfy the Harnack inequality which then implies H\"older continuity. Here, we are interested in situations beyond the uniform ellipticity.

\smallskip

In \cite{T71} Trudinger considered non-uniformly elliptic equations of the type \eqref{eq}. Instead of essential boundedness, he assumed that  $\lambda^{-1}\in L^q(\Omega)$ and $\mu\in L^p(\Omega)$ with $\frac1p+\frac1q<\frac2d$ and proved that weak solutions to \eqref{eq} are locally bounded and satisfy the Harnack inequality. In this paper, we prove both results under the less restrictive and essentially optimal assumption $\frac1p+\frac1q<\frac2{d-1}$. More precisely, we establish the following
\begin{theorem}\label{T:1}
Fix $d\geq2$, a domain $\Omega\subset\R^d$ and $p,q\in(1,\infty]$ satisfying
\begin{equation}\label{eq:pq}
 \frac 1p + \frac 1q < \frac{2}{d-1}.
\end{equation}
%%
%\begin{equation}\label{eq:pq}
%\frac1\lambda\in L^q(\Omega),\,\mu\in L^p(\Omega)\quad\mbox{with}\quad p\in(1,\infty],\, q\in(1,\infty]\quad\mbox{satisfying}\quad\frac 1p + \frac 1q < \frac{2}{d-1}.
%\end{equation}
%%
Let $a:\Omega\to\R^{d\times d}$ be such that $\lambda$ and $\mu$ given in \eqref{def:lmu} are non-negative and satisfy $\frac1\lambda\in L^q(\Omega)$, $\mu\in L^p(\Omega)$. Then any weak solution $u$ of \eqref{eq} in $\Omega$ satisfies:
\begin{enumerate}[(i)]
\item (Local boundedness) For every $\gamma>0$ there exists $c=c(\gamma,d,p,q)\in[1,\infty)$ such that for any ball $B_R\subset\Omega$, $R>0$, it holds
\begin{equation}\label{est:C:bound}
 \|u\|_{L^\infty(B_{\frac{R}2})} \leq c\Lambda(B_R)^{\frac{p'}{\gamma}(1+\frac1\delta)}\left(\fint_{B_R} |u|^\gamma\right)^\frac1\gamma,
\end{equation}
where $\delta:=\min\{\frac{1}{d-1}-\frac1{2p},\frac12\}-\frac1{2q}>0$, $p':=\frac{p}{p-1}$ and for every measurable set $S\subset\Omega$
\begin{equation}\label{def:lambda}
 \Lambda(S):=\left(\fint_S \mu^p\right)^\frac1p\left(\fint_S \lambda^{-q}\right)^\frac1q.
\end{equation}
%\item (Local boundedness) For every $\gamma>0$ there exists $c=c(\gamma,d,p,q)\in[1,\infty)$ such that for any ball $B_R\subset\Omega$ and $\theta\in(0,1)$ it holds
%
%\begin{equation}\label{est:C:bound}
% \|u\|_{L^\infty(B_{\theta R})} \leq c\frac{\Lambda(B_R)^{\frac{p'}{\gamma}(1+\frac1\delta)}}{(1-\theta)^{\frac{d}{\gamma}s}}\left(\fint_{B_R} |u|^\gamma\right)^\frac1\gamma,
%\end{equation}
%
%where $\delta:=\frac{1}{d-1}-\frac1{2p}-\frac1{2q}>0$, $s:=1+p'(1+\frac{1}{\delta})(\frac1p+\frac1q)$, $p':=\frac{p}{p-1}$ and for every measurable set $S\subset\Omega$
%
%\begin{equation}\label{def:lambda}
% \Lambda(S):=\left(\fint_S \mu^p\right)^\frac1p\left(\fint_S \lambda^{-q}\right)^\frac1q.
%\end{equation}
%
\item (Harnack inequality) If $u$ is non-negative in the ball $B_R\subset\Omega$, then 
\begin{equation}\label{est:harnack:strong}
 \sup_{B_{\frac{R}2}} u\leq c\inf_{B_{\frac{R}2} } u,
\end{equation}
where $c=c(d,p,q,\Lambda(B_R))\in[1,\infty)$.

\end{enumerate}
\end{theorem}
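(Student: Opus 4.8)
The plan is to follow Moser's iteration scheme, but carefully track how the non-uniform ellipticity enters through the weighted quantities $\mu$ and $\lambda^{-1}$, and to exploit the improved Sobolev exponent $\frac{2}{d-1}$ rather than $\frac 2d$ by using a Sobolev inequality on spheres (slicing) instead of on balls. Concretely, for part (i) I would test the equation \eqref{eq} with $\eta^2 u |u|^{2(\beta-1)}$ for $\beta\geq 1$ and a cutoff $\eta$, producing a Caccioppoli-type estimate
\begin{equation*}
 \int \eta^2 \lambda |\nabla (u|u|^{\beta-1})|^2 \lesssim \beta^2 \int |\nabla \eta|^2 \mu u^2 |u|^{2(\beta-1)}.
\end{equation*}
The left side controls $\int \eta^2 \lambda |\nabla v|^2$ with $v:=u|u|^{\beta-1}$; the right side is a weighted $L^2$ norm of $v$. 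The point where the $d-1$ comes in: instead of using $\|w\|_{L^{2^*}}\lesssim\|\nabla w\|_{L^2}$ on a ball in $\R^d$, one slices the ball into spheres $\partial B_r$, applies the $(d-1)$-dimensional Sobolev/Sobolev–Poincaré inequality on each sphere (where the relevant exponent is $\frac{2(d-1)}{d-3}$, i.e.\ one gains integrability with the better scaling), integrates in $r$, and uses H\"older in $r$ to recombine. This is the technical heart and the main obstacle: making the spherical Sobolev inequality interact correctly with the radial direction so that the gain in integrability exponent survives, while absorbing the weights $\mu,\lambda^{-1}$ via H\"older with exponents $p,q$ — this is precisely where the constraint $\frac 1p+\frac 1q<\frac{2}{d-1}$ and the definition of $\delta$ originate. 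One must check the arithmetic so that after applying H\"older the leftover exponent on $|v|^2$ is strictly larger than $1$, giving a genuine reverse-H\"older/self-improving inequality $\Big(\fint_{B_r}|u|^{2\beta\kappa}\Big)^{1/\kappa}\lesssim \frac{C(\Lambda)\beta^{2}}{(r-r')^{2}}\,r^{\alpha}\fint_{B_r}|u|^{2\beta}$ with gain factor $\kappa=\kappa(d,p,q)>1$.

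Given such an inequality, the iteration is standard: choose $\beta_n=\kappa^n$ and radii $R_n=\frac R2(1+2^{-n})$ decreasing to $R/2$, iterate, and sum the resulting geometric-type series $\sum \kappa^{-n}\log(C\beta_n^2 4^n/R^2\cdot R^\alpha)$, which converges because $\kappa>1$. This yields $\|u\|_{L^\infty(B_{R/2})}\lesssim \Lambda(B_R)^{\theta}\big(\fint_{B_R}|u|^2\big)^{1/2}$ with $\theta$ determined by the number of accumulated factors of $\Lambda$; tracking constants shows $\theta=\frac{p'}{2}(1+\frac 1\delta)$ and then a further (by-now classical) interpolation argument reduces the exponent $2$ on the right to an arbitrary $\gamma>0$ at the cost of the stated power of $\Lambda$, giving \eqref{est:C:bound}. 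Here scale-invariance is maintained throughout by working with $\fint$ and by homogeneity of $\Lambda$, so the constant $c$ depends only on $\gamma,d,p,q$.

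For part (ii), the Harnack inequality, I would combine the local boundedness from (i) (the "$\sup$ part", applied with a small exponent $\gamma$) with the standard two remaining ingredients. First, a weak Harnack / reverse estimate controlling $\big(\fint_{B_R}u^{\gamma}\big)^{1/\gamma}$ from below by $\inf_{B_{R/2}}u$ for some small $\gamma>0$: this is obtained by the same Moser iteration applied to negative powers of $u$ (test with $\eta^2 u^{-(2\beta-1)}$ for $u$ bounded below by a positive constant, legitimate since one may add $\e$ and pass to the limit) together with a Poincaré/John–Nirenberg-type lemma showing $\log u \in \mathrm{BMO}$ with a seminorm controlled by $\Lambda(B_R)$; the logarithmic estimate again uses the spherical Sobolev trick and the exponents $p,q$ satisfying \eqref{eq:pq}. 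The John–Nirenberg lemma then bridges the positive and negative small exponents: $\big(\fint u^{\gamma}\big)^{1/\gamma}\lesssim \big(\fint u^{-\gamma}\big)^{-1/\gamma}$. Chaining these three estimates,
\begin{equation*}
 \sup_{B_{R/2}} u \;\lesssim\; \Big(\fint_{B_R}u^{\gamma}\Big)^{1/\gamma} \;\lesssim\; \Big(\fint_{B_R}u^{-\gamma}\Big)^{-1/\gamma} \;\lesssim\; \inf_{B_{R/2}} u,
\end{equation*}
gives \eqref{est:harnack:strong} with $c=c(d,p,q,\Lambda(B_R))$. The main subtlety in (ii), beyond reusing the machinery of (i), is the logarithmic estimate and making John–Nirenberg work with the weighted Poincaré inequality on balls that the slicing argument provides; I expect the weight $\mu$ on the right-hand side of the Caccioppoli estimate for $\log u$ to force exactly the same exponent condition, confirming optimality.
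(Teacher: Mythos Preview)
Your high-level strategy is the paper's: Moser iteration with the $(d-1)$-dimensional Sobolev inequality on spheres is exactly what replaces $\frac2d$ by $\frac2{d-1}$. But two pieces of your implementation have genuine gaps.

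\medskip
\textbf{Part (i).} You describe the slicing as ``apply spherical Sobolev to the solution on each $S_r$, integrate in $r$, recombine by H\"older'', arriving at a reverse-H\"older inequality $\big(\fint|u|^{2\beta\kappa}\big)^{1/\kappa}\lesssim\fint|u|^{2\beta}$ in pure Lebesgue norms. That is not how the argument closes, and I do not see how to make it close that way. The paper's mechanism is to \emph{optimize the cutoff} in the Caccioppoli right-hand side $\int\mu v^2|\nabla\eta|^2$: restricting to radial $\eta$ reduces this to the one-dimensional problem $\int_\rho^\sigma\eta'(r)^2\big(\int_{S_r}\mu v^2\big)\,dr$, whose explicit minimizer yields $(\sigma-\rho)^{-(1+\frac1\gamma)}\big(\int_\rho^\sigma(\int_{S_r}\mu v^2)^\gamma\,dr\big)^{1/\gamma}$ for any $\gamma>0$; only \emph{then} does H\"older on $S_r$ plus spherical Sobolev apply, and the particular choice $\gamma=\frac{d-1}{d+1}$ is what makes the subsequent H\"older in $r$ land exactly on $\|\mu\|_{L^p(B)}$ and $\|v\|_{W^{1,p_*}(B)}^2$ (Lemma~\ref{L:optimcutoff}). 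With a fixed (e.g.\ linear) cutoff the H\"older-in-$r$ step produces a mixed norm strictly stronger than $\|v\|_{W^{1,p_*}(B)}$ and the iteration does not close. The output is an estimate of $\|\nabla(u_+^\alpha)\|_{L^{2q/(q+1)}(B_\rho)}$ by $\|u_+^\alpha\|_{W^{1,p_*}(B_\sigma)}$ with $p_*<\frac{2q}{q+1}$; the gain is in \emph{gradient} integrability, and the iteration \eqref{est:moser:basis} runs in $W^{1,p_*}$ norms of $u_+^\alpha$, not in $L^p$ norms alone. This is the step you flag as ``the main obstacle'' but do not resolve.

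\medskip
\textbf{Part (ii).} Invoking John--Nirenberg is problematic here. The logarithmic Caccioppoli inequality $\int_B\lambda|\nabla\log u|^2\lesssim\int_B\mu$ gives, after H\"older and Poincar\'e--Sobolev, only $\big(\fint_B|\log u-(\log u)_B|^{q_*}\big)^{1/q_*}\lesssim\Lambda(B)^{1/2}$ on the \emph{fixed} ball $B$; since $\Lambda(B')$ is not uniformly controlled over sub-balls $B'\subset B$, $\log u$ is not in BMO and the classical John--Nirenberg lemma does not apply. The paper bypasses John--Nirenberg entirely: setting $w=(\log\frac{u}{k})_+$ with $k=\exp\big(\fint\log u\big)$, it runs another Moser-type iteration (again driven by Lemma~\ref{L:optimcutoff}) on $w$ using the test function $\eta^2 u^{-1}(w^\beta+(2\beta)^\beta)$, obtaining directly $\|w\|_{L^s(B_\theta)}\le C(\Lambda)\,s$ for all $s\ge1$, whence $\int e^{s_0 w}\le2$ by summing the exponential series. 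Together with the observation that $\log(k/u)$ is $a$-subharmonic (so part~(i) applies to it), this yields the weak Harnack inequality without any BMO machinery.
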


\begin{remark}
As mentioned above, the conclusions of Theorem~\ref{T:1} are proven in the classical paper of Trudinger~\cite{T71} under the more restrictive integrability condition $\frac1\lambda\in L^q(\Omega),\mu\in L^p(\Omega)$ with
\begin{equation}\label{eq:pqlooser}
 p,q\in(1,\infty],\qquad \frac1p+\frac1q<\frac2d,
\end{equation} 
see also the paper by Murthy and Stampacchia \cite{MS68} for related results. To the best of our knowledge Theorem~\ref{T:1} contains the first improvements with respect to global integrability of $\frac1\lambda$ and $\mu$, compared to the corresponding results in \cite{MS68,T71} (see \cite{CMM18} for a recent generalization of the findings in \cite{MS68,T71} to non-linear non-uniformly elliptic equations under assumptions that match \eqref{eq:pqlooser} in the linear case).  Assumption \eqref{eq:pq} is essentially sharp in order to establish local boundedness (and thus also the validity of Harnack inequality) for weak solutions of \eqref{eq}. Indeed, in view of a counterexample by Franchi, Serapioni and Serra Cassano \cite{FSS98} the conclusion of Theorem~\ref{T:1} is false if condition \eqref{eq:pq} is replaced by $\frac1p+\frac1q<\frac2{d-1}+\e$ for any $\e>0$, see Remark~\ref{rem:bound} below. However, we emphasize that under additional local assumption (e.g.\ that $\lambda,\mu$ are in the Muckenhoupt class $A_2$) stronger results are available under weaker global integrability assumptions, see e.g.\ \cite{FKS82,CW86}.
\end{remark}

\begin{remark}\label{rem:bound2d}
Note that if $\frac{1}{d-1}-\frac1{2p}-\frac1{2q}$ tends to zero from above, the prefactor on the right-hand side in \eqref{est:C:bound} blows up and we do not know if weak solutions of \eqref{eq} are locally bounded in the borderline situation $\frac1p+\frac1q=\frac2{d-1}$ in general. However, in the special case of two dimensions we are able to show local boundedness of weak solutions under the minimal assumption $p=q=1$ (and thus $\frac1p+\frac1q=2=\frac2{d-1}$), see Proposition~\ref{P:bound:2d}.
\end{remark}

As an application of Theorem~\ref{T:1} we consider the corrector equation in stochastic homogenization. Currently,  homogenization and large scale regularity for equations with random and degenerate coefficients is an active field of research, see e.g.\ \cite{ADS15,AN17,AD18,AS14,BFO18,CD16,DNS,FK97,FHS17,NSS17}. Recently, sublinearity (in $L^\infty$) of the corrector in stochastic homogenization was proven in \cite{CD16} (see also \cite{FK97}) under certain moment conditions which are comparable to \eqref{eq:pqlooser} (see also \cite{ADS15,DNS} for related results in the discrete setting). In \cite{ADS15,CD16,DNS,FK97}, the $L^\infty$-sublinearity of the corrector is the key ingredient to prove quenched invariance principles for random walks \cite{ADS15,DNS} or diffusion \cite{CD16,FK97} in a random environment with degenerate and/or unbounded coefficients.  In this paper, we establish $L^\infty$-sublinearity of the corrector under relaxed moment conditions, see Proposition~\ref{P:sublinear}.

\bigskip

The paper is organised as follows: In Section~\ref{sec:cutoff}, we present a technical lemma which implies an improved version of Caccioppoli inequality. This lemma plays a prominent role in the proof of Theorem~\ref{T:1} and is the main source for the improvement compared to the previous results in \cite{MS68,T71,T73}. In Section~\ref{sec:bound}, we make precise the notion of weak solution and prove part (i) of Theorem~\ref{T:1} and local boundedness for weak subsolution of \eqref{eq}. Section~\ref{sec:bound} contains an improvement of part (i) of Theorem~\ref{T:1} valid only in two dimensions, see Proposition~\ref{P:bound:2d}. In Section~\ref{sec:harnack}, we establish part (ii) of Theorem~\ref{T:1} as a consequence of a weak Harnack inequality for non-negative weak supersolutions of \eqref{eq} and the local boundedness. Moreover, we list in Section~\ref{sec:harnack} several direct consequences of the Harnack inequality. In the final Section~\ref{sec:prob}, we apply Theorem~\ref{T:1} to the corrector equation of stochastic homogenization and prove $L^\infty$-sublinearity of the corrector. 

\section{An auxiliary lemma}\label{sec:cutoff}

In this section, we provide a key estimate, formulated in Lemma~\ref{L:optimcutoff} below, that is central in our proof of Theorem~\ref{T:1}. 

\begin{lemma}\label{L:optimcutoff}
Fix $d\geq2$ and $p\geq1$ satisfying $p>\frac{d-1}2$ if $d\geq3$. Suppose $0<\rho<\sigma<\infty$ and $v\in W^{1,p_*}(B_\sigma)$ with $\frac{1}{p_*}=\min\{\frac12-\frac1{2p}+\frac{1}{d-1},1\}$.  Consider
\begin{equation*}
 J(\rho,\sigma,v):=\inf\left\{\int_{B_\sigma}\mu |v|^2|\nabla \eta|^2\,dx \;|\;\eta\in C_0^1(B_\sigma),\,\eta\geq0,\,\eta=1\mbox{ in $B_\rho$}\right\}.
\end{equation*}
Then there exists $c=c(d,p)\in[1,\infty)$ such that
\begin{equation}\label{est:cutoff}
J(\rho,\sigma,v)\leq c(\sigma-\rho)^{-\frac{2d}{d-1}}\|\mu\|_{L^p(B_\sigma\setminus B_\rho)}\left(\|\nabla v\|_{L^{p_*}(B_\sigma\setminus B_\rho)}^2+\rho^{-2}\|v\|_{L^{p_*}(B_\sigma\setminus B_\rho)}^2\right).
\end{equation}
\end{lemma}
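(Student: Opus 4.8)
The plan is to construct a near-optimal cutoff function $\eta$ explicitly as a radial function and estimate the resulting integral by a combination of H\"older's inequality and the Sobolev trace/embedding theorem on spheres. Concretely, I would look for $\eta$ of the form $\eta(x) = f(|x|)$ with $f(r)=1$ for $r\le\rho$, $f(r)=0$ for $r\ge\sigma$, and $f$ nonincreasing; the freedom in choosing $f$ is what allows us to beat the naive linear cutoff (which only gives $(\sigma-\rho)^{-2}$). The key point is that the relevant quantity $\int_{B_\sigma}\mu|v|^2|\nabla\eta|^2 = \int_\rho^\sigma f'(r)^2 \big(\int_{\partial B_r}\mu|v|^2\,d\mathcal{H}^{d-1}\big)\,dr$ is controlled, after an optimization over $f$ with fixed boundary values (a one-dimensional variational problem with solution given by a weighted harmonic interpolation), by $\big(\int_\rho^\sigma g(r)^{-1}\,dr\big)^{-1}$ where $g(r):=\int_{\partial B_r}\mu|v|^2\,d\mathcal{H}^{d-1}$. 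Thus $J(\rho,\sigma,v)\le \big(\int_\rho^\sigma g(r)^{-1}\,dr\big)^{-1}$, and by Jensen's inequality this is at most $(\sigma-\rho)^{-2}\fint_\rho^\sigma g(r)\,dr$ — but that is exactly the weak bound, so the real work is to exploit integrability of $g^{-1}$ more cleverly, or equivalently to bound $g(r)$ pointwise in $r$ on a large set of radii.

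The heart of the argument is therefore the pointwise-in-$r$ estimate of $g(r)=\int_{\partial B_r}\mu|v|^2\,d\mathcal{H}^{d-1}$. I would apply H\"older on the sphere $\partial B_r$ (a $(d-1)$-dimensional manifold) to get $g(r)\lesssim r^{(d-1)/p'}\|\mu\|_{L^p(\partial B_r)}\,\||v|^2\|_{L^{p'}(\partial B_r)} = r^{(d-1)/p'}\|\mu\|_{L^p(\partial B_r)}\,\|v\|_{L^{2p'}(\partial B_r)}^2$, and then invoke the Sobolev embedding on the $(d-1)$-sphere to bound $\|v\|_{L^{2p'}(\partial B_r)}$ by $\|v\|_{W^{1,p_*}(\partial B_r)}$ (up to the correct powers of $r$); the condition $\frac{1}{p_*}=\min\{\frac12-\frac1{2p}+\frac1{d-1},1\}$ is precisely the Sobolev exponent on the $(d-1)$-dimensional sphere making $W^{1,p_*}\hookrightarrow L^{2p'}$ borderline-admissible, and the hypothesis $p>\frac{d-1}2$ for $d\ge3$ is what keeps $p_*$ finite (equivalently the embedding nontrivial). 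Integrating this pointwise bound over $r\in(\rho,\sigma)$ and using H\"older in $r$ — splitting the $dr$ integral so that the $\|\mu\|_{L^p(\partial B_r)}$ factors combine into $\|\mu\|_{L^p(B_\sigma\setminus B_\rho)}$ and the $v$-factors into $\|\nabla v\|_{L^{p_*}(B_\sigma\setminus B_\rho)}^2+\rho^{-2}\|v\|_{L^{p_*}(B_\sigma\setminus B_\rho)}^2$ via a co-area/Fubini argument — should produce an estimate of the form $\int_\rho^\sigma g(r)\,dr\lesssim (\text{the product on the RHS of }\eqref{est:cutoff})$, but with a different power of $(\sigma-\rho)$.

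To get the sharp power $(\sigma-\rho)^{-2d/(d-1)}$ rather than $(\sigma-\rho)^{-2}$, I would not use Jensen directly but instead feed the pointwise bound $g(r)\le h(r)$ (with $h$ built from the H\"older/Sobolev estimate above) into the sharp one-dimensional inequality $\big(\int_\rho^\sigma h(r)^{-1}\,dr\big)^{-1}$; choosing the cutoff profile $f$ adapted to $h$ and carefully tracking the scaling in $r$ — the sphere measure contributes $r^{d-1}$, the Sobolev inequality on $\partial B_r$ contributes extra powers of $r$ matching the $\rho^{-2}$ in the statement, and optimizing the split of the exponent $2d/(d-1) = 2 + \frac{2}{d-1}$ between the "one-dimensional" loss and the "sphere Sobolev" loss — yields exactly the claimed exponent. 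The main obstacle is this bookkeeping of powers of $r$: one must verify that replacing $r$ by $\rho$ throughout (legitimate since $\rho<r<\sigma$, and one can further reduce to $\sigma\le 2\rho$ by a standard covering/iteration since the estimate is local near $\partial B_\rho$) is consistent, and that the interplay between the sphere-Sobolev exponent $p_*$ and the $dr$-H\"older exponent produces precisely $-\frac{2d}{d-1}$ and not something worse. A secondary technical point is justifying the trace/slicing for a general $v\in W^{1,p_*}(B_\sigma)$ (restrictions to a.e.\ sphere lie in $W^{1,p_*}(\partial B_r)$ by Fubini on spherical coordinates), and handling the endpoint case $p_*=1$ (i.e.\ when the min is achieved by $1$) where the Sobolev embedding $W^{1,1}(\partial B_r)\hookrightarrow L^{(d-1)/(d-2)}$ is used.
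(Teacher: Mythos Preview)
Your ingredients are exactly those of the paper: reduce to radial cutoffs, solve the one-dimensional variational problem explicitly to get $J\le\big(\int_\rho^\sigma g(r)^{-1}\,dr\big)^{-1}$ with $g(r)=\int_{\partial B_r}\mu|v|^2$, then use H\"older on $\partial B_r$ followed by the Sobolev embedding $W^{1,p_*}(\partial B_r)\hookrightarrow L^{2p'}(\partial B_r)$ on the $(d-1)$-sphere (and yes, $p_*$ is exactly the critical exponent for this). The identification of the condition $p>\frac{d-1}{2}$ and the endpoint discussion are also correct in spirit.

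The genuine gap is the step you yourself flag as ``the main obstacle'': how to pass from the harmonic mean $\big(\int_\rho^\sigma g^{-1}\big)^{-1}$ to the right-hand side with exponent $(\sigma-\rho)^{-2d/(d-1)}$. Your suggestion --- bound $g\le h$ pointwise and then evaluate $\big(\int h^{-1}\big)^{-1}$, ``choosing the cutoff profile $f$ adapted to $h$'' --- is circular: the optimal $f$ has already been used to produce $\big(\int g^{-1}\big)^{-1}$, and bounding the harmonic mean of a \emph{product} $h(r)=\|\mu\|_{L^p(\partial B_r)}\|v\|_{W^{1,p_*}(\partial B_r)}^2$ from above is no easier than for $g$ itself. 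Jensen only gives $(\sigma-\rho)^{-2}$, as you observe, and nothing in your sketch improves on that.

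What the paper does instead is interpolate between the harmonic mean and the arithmetic mean: by H\"older with exponents $s,s'$ applied to $\sigma-\rho=\int_\rho^\sigma g^{1/s'}\cdot g^{-1/s'}$, one gets for every $\gamma>0$
\[
\Big(\int_\rho^\sigma g^{-1}\Big)^{-1}\le (\sigma-\rho)^{-(1+\frac1\gamma)}\Big(\int_\rho^\sigma g(r)^\gamma\,dr\Big)^{1/\gamma}.
\]
Only \emph{after} this does one apply H\"older and Sobolev on $\partial B_r$ to $g(r)^\gamma$, and then H\"older in $r$ to recombine the factors into $\|\mu\|_{L^p(B_\sigma\setminus B_\rho)}$ and $\|v\|_{W^{1,p_*}(B_\sigma\setminus B_\rho)}^2$. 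The free parameter $\gamma$ is fixed by the requirement that the $r$-H\"older exponents match: $\frac{\gamma}{p}+\frac{2\gamma}{p_*}=1$ forces $\gamma=\frac{d-1}{d+1}$, and then $1+\frac1\gamma=\frac{2d}{d-1}$ is exactly the claimed exponent. This is the missing mechanism in your plan. (Two minor points: the factor $r^{(d-1)/p'}$ you insert in the spherical H\"older step should not be there, and the reduction to $\sigma\le 2\rho$ is unnecessary --- the $\rho^{-2}$ in the statement comes from scaling the Sobolev inequality on $\partial B_r$ to $\partial B_1$, not from any dyadic decomposition.)
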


\begin{proof}[Proof of Lemma~\ref{L:optimcutoff}]
\step 1 We claim 
\begin{equation}\label{1dmin}
 J(\rho,\sigma,v)\leq  (\sigma-\rho)^{-(1+\frac1\gamma)} \left(\int_{\rho}^\sigma \left(\int_{S_r} \mu |v|^2\right)^\gamma\,dr\right)^\frac1\gamma\qquad\mbox{for every $\gamma>0$}.
\end{equation}
Estimate \eqref{1dmin} follows directly by minimizing among radial symmetric cut-off functions. Indeed, we obviously have for every $\e\geq0$
\begin{equation}
 J(\rho,\sigma,v)\leq \inf\left\{\int_{\rho}^\sigma \eta'(r)^2\left(\int_{S_r}\mu |v|^2+\e\right)\,dr \;|\;\eta\in C^1(\rho,\sigma),\,\eta(\rho)=1,\,\eta(\sigma)=0\right\}=:J_{{\rm 1d},\e}.
\end{equation}
For $\e>0$, the one-dimensional minimization problem $J_{{\rm 1d},\e}$ can be solved explicitly and we obtain
\begin{equation*}
J_{{\rm 1d},\e}=\left(\int_{\rho}^\sigma \left(\int_{S_r}\mu |v|^2+\e\right)^{-1}\,dr\right)^{-1}.
\end{equation*}
%
%(This follows by choosing a regularized version of $\eta(r):=1-(\int_\rho^\sigma\frac1{\int_{S_r}\mu|v|^2+\e}\,dr)^{-1}\int_{\rho}^r\frac1{\int_{S_r}\mu|v|^2+\e}\,dr$. 
Then H\"older inequality $\sigma-\rho=\int_\rho^\sigma \frac{f}{f}\leq\left(\int_\rho^\sigma f^s\right)^\frac1s\left(\int_\rho^\sigma \frac1{f^{s'}}\right)^\frac1{s'}$ with $s'=\frac{s}{s-1}$ and $f(r):=\left(\int_{S_r}\mu |v|^2+\e\right)^\frac{1}{s'}$ yield for any $s>1$
\begin{equation*}
J_{{\rm 1d},\e}\leq (\sigma-\rho)^{-s'}\left(\int_{\rho}^\sigma \left(\int_{S_r}\mu |v|^2+\e\right)^\frac{s}{s'}\,dr\right)^{\frac{s'}s}.
\end{equation*}
Sending $\e$ to zero, we obtain claim \eqref{1dmin} with $\gamma=s-1>0$.

\step 2 Let us first assume $d\geq3$. Note that $p>\frac{2}{d-1}$ implies $p_*\in[1,2)$. We estimate the right-hand side of \eqref{1dmin} with help of the H\"older and Sobolev inequality of the type
\begin{equation}\label{ineq:sobd3}
\forall s\in[1,d-1)\;\exists c=c(d,s):\quad \|\varphi\|_{L^{s*}(S_1)}\leq c\|\varphi\|_{W^{1,s}(S_1)}\quad\mbox{where $s^*=\frac{(d-1)s}{d-1-s}$},
\end{equation}
and $S_1=\partial B_1$. More precisely, there exists $c=c(p,d)\in[1,\infty)$
\begin{align}\label{est:J2}
J(\rho,\sigma,v)\leq&  \frac{1}{(\sigma-\rho)^{1+\frac1\gamma}} \left(\int_{\rho}^\sigma  \left(\int_{S_r} \mu^p\right)^{\frac\gamma{p}}\left(\int_{S_r} |v|^{\frac{2p}{p-1}}\right)^{\frac{(p-1)\gamma}{p}}\,dr\right)^\frac1\gamma\notag\\
\leq& \frac{c}{(\sigma-\rho)^{1+\frac1\gamma}} \left(\int_{\rho}^\sigma  \left(\int_{S_r} \mu^p\right)^{\frac\gamma{p}}\left(\left(\int_{S_r} |\nabla v|^{p_*}\right)^{\frac{2\gamma}{p_*}}+r^{-2\gamma}\left(\int_{S_r} |v|^{p_*}\right)^{\frac{2\gamma}{p_*}}\right)\,dr\right)^\frac1\gamma,
\end{align}
where $\frac{p-1}{2p}=\frac1{p_*}-\frac1{d-1}$.  The choice $\gamma=\frac{d-1}{d+1}$ yields $\frac{\gamma}{p}+\frac{2\gamma}{{p_*}}=1$ so that we can apply H\"older inequality
\begin{align*}
J(\rho,\sigma,v)\leq \frac{c}{(\sigma-\rho)^{\frac{2d}{d-1}}} \left(\int_{B_\sigma\setminus B_\rho} \mu^p\right)^\frac{1}p \left(\left(\int_{B_\sigma\setminus B_\rho}|\nabla v|^{p_*}\right)^\frac{2}{p_*}+\frac1{\rho^2}\left(\int_{B_\sigma\setminus B_\rho}|v|^{p_*}\right)^\frac{2}{p_*}\right)
\end{align*}
which is the desired estimate.

\smallskip

Finally, we suppose $d=2$. In this case we have $p_*=1$. Instead of \eqref{ineq:sobd3}, we use one-dimensional Sobolev inequality $\|\varphi\|_{L^{\infty}(S_1)}\leq c\|\varphi\|_{W^{1,1}(S_1)}$ to obtain the estimate \eqref{est:cutoff} as above (but now also in the borderline case $p=1$). 

\end{proof}

\section{Local boundedness  proof of part (i) of Theorem~\ref{T:1}}\label{sec:bound}

In this section we prove part (i) of Theorem~\ref{T:1} as a consequence of a local boundedness result for weak subsolutions of \eqref{eq}. Before we state the result, we first define the notion of weak solution to \eqref{eq} that we consider here.
\begin{definition}\label{def:solution}
Fix a domain $\Omega\subset\R^d$ and a coefficient field $a:\Omega\to\R^{d\times d}$ such that $\lambda,\mu\geq0$ given in \eqref{def:lmu} satisfy $\frac1\lambda,\mu\in L^1(\Omega)$. The spaces $H_0^1(\Omega,a)$ and $H^1(\Omega,a)$ are respectively defined as the completion of $C_0^1(\Omega)$ and $C^1(\Omega)$ with respect to the norm $\|\cdot\|_{H^1(\Omega,a)}:=\left(\mathcal A_1(\cdot,\cdot)\right)^\frac12$, where
\begin{align}\label{def:A1}
\qquad\mathcal A_1(u,v):=& \mathcal A(u,v)+\int_\Omega \mu u  v,\qquad\mbox{with}\quad\mathcal A(u,v):=\int_\Omega a\nabla u \cdot \nabla v.%\quad\mbox{and $a_s=\frac12(a+a^t)$}
\end{align}
Moreover, we denote by $H_{\rm loc}^1(\Omega,a)$ the family of functions $u$ satisfying $u\in H^1(\Omega',a)$ for every bounded open set $\Omega'\Subset\Omega$.

We call $u$ a weak solution (subsolution, supersolution) of \eqref{eq} in $\Omega$ if and only if $u\in H^1(\Omega,a)$ and
\begin{equation}\label{def:harmonic}
 \forall \phi\in H_0^1(\Omega,a),\, \phi\geq0:\qquad \mathcal A(u,\phi)=0\quad (\leq0,\geq0).
\end{equation}
Moreover, we call $u$ a local weak solution of \eqref{eq} in $\Omega$ if and only if $u$ is a weak solution of \eqref{eq} in $\Omega'$ for every bounded open set $\Omega'\Subset\Omega$. Throughout the paper, we call a solution (subsolution, supersolution) of \eqref{eq} in $\Omega$ $a$-harmonic ($a$-subharmonic, $a$-superharmonic) in $\Omega$. 
\end{definition}

For general properties of the spaces $H^1(\Omega,a)$ and $H_0^1(\Omega,a)$, we refer to \cite{T71,T73}. We only recall here the chain rule
\begin{remark}
 Let $g:\R\to\R$ be uniformly Lipschitz-continuous with $g(0)=0$ and consider the composition $F:=g(u)$. Then, $u\in H_0^1(\Omega,a)$ (or $\in H^1(\Omega,a)$) implies $F \in H_0^1(\Omega,a)$ (or $\in H^1(\Omega,a)$), and it holds $\nabla F=g'(u)\nabla u$ a.e.\ (see e.g.\ \cite[Lemma 1.3]{T73}). In particular,  if $u$ satisfies $u\in H^1(\Omega,a)$ (or $\in H^1(\Omega,a)$) then also the truncations
\begin{equation}\label{def:u+-}
 u_+:=\max\{u,0\};\quad u_{-}:=-\min\{u,0\}
\end{equation}
satisfy $u_+,u_-\in H^1(\Omega,a)$ (or $\in H^1(\Omega,a)$).
\end{remark}
Now we come to the local boundedness from above for weak subsolutions of \eqref{eq}.
\begin{theorem}\label{T:bound}
Fix $d\geq2$, a domain $\Omega\subset\R^d$ and $p,q\in(1,\infty]$ satisfying \eqref{eq:pq}. Let $a:\Omega\to\R^{d\times d}$ be such that $\lambda$ and $\mu$ given in \eqref{def:lmu} are non-negative and satisfy $\frac1\lambda\in L^q(\Omega)$, $\mu\in L^p(\Omega)$. Then every weak subsolution $u$ of \eqref{eq} in $\Omega$ is locally bounded from above and for every $\gamma>0$ there exists $c=c(\gamma,d,p,q)\in[1,\infty)$ such that for any ball $B_R\subset\Omega$ and $\theta\in(0,1)$ 
\begin{equation}\label{est:T:bound}
 \sup_{B_{\theta R}}u\leq c\frac{\Lambda(B_R)^{\frac{p'}{\gamma}(1+\frac1\delta)}}{(1-\theta)^{\frac{d}{\gamma}s}}\left(\fint_{B_R} u_+^\gamma\right)^\frac1\gamma,
\end{equation}
where $\delta=\min\{\frac{1}{d-1}-\frac1{2p},\frac12\}-\frac1{2q}>0$, $s:=1+p'(1+\frac{1}{\delta})(\frac1p+\frac1q)$, $p'=\frac{p}{p-1}$ and $\Lambda(B_R)$ is defined in \eqref{def:lambda}.
\end{theorem}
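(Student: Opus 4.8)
The plan is to run a Moser iteration in which Lemma~\ref{L:optimcutoff} plays the role of the Caccioppoli inequality. First I would reduce: rescaling to $R=1$; since $u_+$, and hence $u_++k$ for $k>0$, is again a subsolution and $\|u_++k\|_{L^\infty}\to\|u_+\|_{L^\infty}$, $\fint(u_++k)^\gamma\to\fint u_+^\gamma$ as $k\downarrow0$, it suffices to bound a subsolution $\bar u\ge k>0$; to legitimise the test functions I would work with the truncations $\bar u\wedge L$ and send $L\to\infty$ with $L$-independent constants. It is also enough to treat one fixed exponent $\gamma_0$ (a routine bootstrap supplies the initial integrability feeding the iteration, and the standard interpolation lemma of Bombieri type upgrades $\gamma_0$ to arbitrary $\gamma>0$). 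Throughout set $\delta:=\tfrac1{p_*}-\tfrac12-\tfrac1{2q}$, which equals $\min\{\tfrac1{d-1}-\tfrac1{2p},\tfrac12\}-\tfrac1{2q}$ and is strictly positive by \eqref{eq:pq}; note $\tfrac1{p_*}\ge\tfrac12+\tfrac1{2q}$, and $p>\tfrac{d-1}2$ (for $d\ge3$), so Lemma~\ref{L:optimcutoff} applies.

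For the basic step, with $\beta\ge1$ and $\eta=1$ on $B_\rho$, $\supp\eta\subset B_\sigma$, I would test \eqref{eq} with $\eta^2\bar u^{2\beta-1}$; using \eqref{def:lmu} and Young's inequality and absorbing gives the improved Caccioppoli inequality $\int\lambda|\nabla(\eta\bar u^\beta)|^2\le c\beta^2\int\mu\bar u^{2\beta}|\nabla\eta|^2$. Minimising over $\eta$ and invoking Lemma~\ref{L:optimcutoff} with $v=\bar u^\beta$ yields a compactly supported $w$ with $w=\bar u^\beta$ on $B_\rho$ and
\[
 \int\lambda|\nabla w|^2\le c\beta^2(\sigma-\rho)^{-\frac{2d}{d-1}}\|\mu\|_{L^p(B_\sigma\setminus B_\rho)}\bigl(\|\nabla\bar u^\beta\|_{L^{p_*}(B_\sigma\setminus B_\rho)}^2+\rho^{-2}\|\bar u^\beta\|_{L^{p_*}(B_\sigma\setminus B_\rho)}^2\bigr).
\]
Since $\tfrac1{p_*}\ge\tfrac12+\tfrac1{2q}$ — this is exactly where \eqref{eq:pq} enters — H\"older's inequality against $\tfrac1\lambda\in L^q$ followed by the Sobolev inequality gives $\|\bar u^\beta\|_{L^\kappa(B_\rho)}\le c\,\|\tfrac1\lambda\|_{L^q(B_1)}^{1/2}(\int\lambda|\nabla w|^2)^{1/2}$ with $\tfrac1\kappa:=\tfrac12+\tfrac1{2q}-\tfrac1d$. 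The decisive algebraic point is that, with $\chi:=1+\kappa\delta$, one has $1<\chi<\kappa/p_*$, hence $p_*\chi\le\kappa$ and $\|\bar u^\beta\|_{L^\kappa}=\|\bar u^{\chi\beta}\|_{L^{p_*}}^{1/\chi}$: enlarging the exponent by the factor $\chi$ returns an $L^{p_*}$-quantity, which is exactly the input Lemma~\ref{L:optimcutoff} accepts.

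The main obstacle is the self-referential term $\|\nabla\bar u^\beta\|_{L^{p_*}}$ on the right-hand side. Estimating it by the naive Caccioppoli inequality $\int\mu|\nabla\eta|^2\bar u^{2\beta}\lesssim(\sigma-\rho)^{-2}\|\mu\|_{L^p}\|\bar u^\beta\|_{L^{2p'}}^2$ would only close the iteration under Trudinger's condition $\tfrac1p+\tfrac1q<\tfrac2d$, so it must be avoided. Instead I would write $\nabla\bar u^{\chi\beta}=\chi\,\bar u^{(\chi-1)\beta}\nabla\bar u^\beta$ and estimate $\int|\nabla\bar u^{\chi\beta}|^{p_*}$ by a threefold H\"older inequality: $\lambda^{-p_*/2}$ into $L^{2q/p_*}$ (using $\tfrac1\lambda\in L^q$), the energy density $(\lambda|\nabla\bar u^\beta|^2)^{p_*/2}$ into $L^{2/p_*}$ (available since $\int\lambda|\nabla\bar u^\beta|^2$ was bounded at the previous step), and $\bar u^{(\chi-1)\beta p_*}$ into $L^a$ with $\tfrac1a=1-\tfrac{p_*}{2q}-\tfrac{p_*}2=\tfrac{2q\delta}{q+1+2q\delta}$. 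The choice $\chi=1+\kappa\delta$ makes the resulting integrability exponent $(\chi-1)\beta p_* a$ of $\bar u$ equal to $\kappa\beta$ — precisely what the previous Sobolev step delivers — so one obtains a closed recursion for the pair $\bigl(\|\bar u^{\beta}\|_{L^\kappa},\ \int\lambda|\nabla\bar u^{\beta}|^2\bigr)$. I expect closing this recursion and propagating the $\beta$- and $\Lambda$-dependence through it to be the hard part: since $a\sim\delta^{-1}$ and raising the exponent by a fixed factor costs $\sim(\kappa\delta)^{-1}$ iterations, this is the source of the factor $1+\tfrac1\delta$ in \eqref{est:T:bound}, while $p'$ (through $\|\mu\|_{L^p}$) comes from the spherical Sobolev inequality inside Lemma~\ref{L:optimcutoff}; the fact that $a$ is finite exactly when $\delta>0$ is why the strict inequality \eqref{eq:pq} is needed.

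Finally, iterating the recursion with $\beta_n=\chi^n\beta_0$ and radii $\rho_n$ decreasing from $1$ to $\theta$ with $\rho_n-\rho_{n+1}\simeq(1-\theta)2^{-n}$, and using that $\chi$ is bounded away from $1$ so that the infinite product of prefactors converges, the estimates telescope to $\sup_{B_\theta}\bar u\le c\,\Lambda(B_1)^{\frac{p'}{\gamma_0}(1+\frac1\delta)}(1-\theta)^{-\frac d{\gamma_0}s}(\fint_{B_1}\bar u^{\gamma_0})^{1/\gamma_0}$; removing the restriction on $\gamma_0$ by the interpolation lemma and then letting $L\to\infty$ and $k\downarrow0$ yields \eqref{est:T:bound}.
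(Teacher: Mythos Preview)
Your approach is essentially the paper's: Moser iteration in which Lemma~\ref{L:optimcutoff} replaces the naive Caccioppoli step, a H\"older splitting that feeds the $W^{1,p_*}$ input required by the Lemma at level $\chi\beta$ back to quantities already controlled at level $\beta$, and the standard interpolation to reach small $\gamma$. The bookkeeping differs slightly: the paper iterates on the single quantity $\|u_+^\alpha\|_{W^{1,p_*}}$ with gain factor $\chi=1+\delta$, using the two-term H\"older $\|\nabla u_+^{\alpha\chi}\|_{L^{p_*}}\lesssim\|\nabla u_+^\alpha\|_{L^{2q/(q+1)}}\,\|u_+^\alpha\|_{L^1}^{\delta}$ after bounding $\|\nabla u_+^\alpha\|_{L^{2q/(q+1)}}$ via Lemma~\ref{L:optimcutoff}; you instead track the pair $(\|\bar u^\beta\|_{L^\kappa},\int\lambda|\nabla\bar u^\beta|^2)$ with the larger gain $\chi=1+\kappa\delta$ via a threefold H\"older. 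Both close under \eqref{eq:pq} and both yield the $\Lambda^{O(1/\delta)}$ dependence. One slip: the displayed identity $\|\bar u^\beta\|_{L^\kappa}=\|\bar u^{\chi\beta}\|_{L^{p_*}}^{1/\chi}$ is false for $\chi<\kappa/p_*$; what you need (and clearly mean) is the inequality $\|\bar u^{\chi\beta}\|_{L^{p_*}}^{1/\chi}\lesssim\|\bar u^\beta\|_{L^\kappa}$ on bounded sets.
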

\begin{proof}[Proof of part (i) of Theorem~\ref{T:1}]
 Theorem~\ref{T:bound} applied to the subharmonic functions $u$ and $-u$ implies the desired statement.
\end{proof}

As announced in Remark~\ref{rem:bound2d}, we can relax the assumptions $p,q>1$ and $\frac1p+\frac1q<\frac2{d-1}$ of Theorem~\ref{T:1} in the special case $d=2$:
\begin{proposition}\label{P:bound:2d}
Fix a domain $\Omega\subset\R^2$. Let $a:\Omega\to\R^{2\times 2}$ be measurable such that $\lambda$ and $\mu$ given in \eqref{def:lmu} are non-negative and satisfy $\frac1\lambda,\mu\in L^1(\Omega)$. Then there exists $c\in[1,\infty)$ such that for every weak solution $u$ of \eqref{eq} and for any ball $B_R\subset\Omega$
\begin{equation}\label{est:P:bound2d}
 \|u\|_{L^\infty(B_{\frac{R}2})} \leq c\left(R\left(\fint_{B_R}\lambda^{-1}\right)^\frac12\left(\fint_{B_R}a\nabla u\cdot \nabla u\right)^\frac12+\fint_{B_R}|u|\right).
\end{equation}
\end{proposition}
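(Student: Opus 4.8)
The plan is to follow the Moser iteration strategy, but with the crucial input that in two dimensions the auxiliary Lemma~\ref{L:optimcutoff} is available down to the borderline exponent $p=1$ (this is exactly the last sentence of that lemma's proof). Since $d=2$, we have $p_*=1$, so Lemma~\ref{L:optimcutoff} with $p=1$ gives, for any $0<\rho<\sigma$ and any $v\in W^{1,1}(B_\sigma)$,
\begin{equation*}
 J(\rho,\sigma,v)\leq c(\sigma-\rho)^{-4}\|\mu\|_{L^1(B_\sigma\setminus B_\rho)}\left(\|\nabla v\|_{L^1(B_\sigma\setminus B_\rho)}^2+\rho^{-2}\|v\|_{L^1(B_\sigma\setminus B_\rho)}^2\right).
\end{equation*}
This is what replaces the usual Caccioppoli estimate with a quadratic cutoff, and it is the only place the structure of the equation enters beyond the energy identity. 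First I would set up the Caccioppoli inequality: for $v=u$ (or a power/truncation of $u$) and a cutoff $\eta$ admissible in $J$, testing \eqref{def:harmonic} with $\eta^2 u$ and using ellipticity gives $\int \eta^2 \lambda|\nabla u|^2\lesssim \int \mu u^2|\nabla\eta|^2$, whence taking the infimum over $\eta$ and using $a\nabla u\cdot\nabla u\geq\lambda|\nabla u|^2$ yields control of $\int_{B_\rho} a\nabla u\cdot\nabla u$ by $J(\rho,\sigma,u)$.

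The second step is to linearize the right-hand side. The factor $\|\nabla u\|_{L^1(B_\sigma\setminus B_\rho)}$ is estimated by Cauchy–Schwarz against $\lambda^{-1/2}$: $\int|\nabla u|\leq(\int\lambda^{-1})^{1/2}(\int a\nabla u\cdot\nabla u)^{1/2}$, which brings in $\Lambda$-type quantities but also reintroduces the energy $\int a\nabla u\cdot\nabla u$ on the right. Because the energy appears to the power one on the left (from Step~1) and to the power one on the right (after Cauchy–Schwarz the square root is squared), one has to be careful: the correct way is to run this as a hole-filling / iteration between concentric balls $B_\rho\subset B_\sigma$, absorbing a small multiple of the energy on $B_\sigma$ into the left side and summing a geometric series in the radii, exactly as in the classical proof of Caccioppoli with the bad term. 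This produces a clean bound of $\int_{B_{R/2}} a\nabla u\cdot\nabla u$ by $R^{-2}(\fint_{B_R}\lambda^{-1})\,\big(\text{something}\big)$ plus lower-order terms involving $\fint_{B_R}|u|^2$; combined with $\fint_{B_R}\lambda^{-1}$ this is the structure of the first term on the right of \eqref{est:P:bound2d}.

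The third step upgrades $L^2$-type control to $L^\infty$. Here I would run Moser iteration on powers $v=u_+^{\beta}$ ($\beta\geq1$), at each stage feeding $\nabla v=\beta u_+^{\beta-1}\nabla u$ into Lemma~\ref{L:optimcutoff} and converting $\int \lambda|\nabla v|^2$ into $\int|\nabla(u_+^{\beta})|^{?}$ via Hölder/Sobolev on $\mathbb R^2$ (the borderline Sobolev embedding $W^{1,1}\hookrightarrow L^2$ in $d=2$ is exactly what makes $p=q=1$ work). The gain factor per step in the integrability exponent is bounded below by a fixed ratio $>1$, and the accumulated constants form a convergent product because of the $(\sigma-\rho)^{-4}$ dependence in the cutoff lemma (choose the radii $R_k=R(\frac12+2^{-k-1})$). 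The outcome is $\|u\|_{L^\infty(B_{R/2})}\lesssim \big(\fint_{B_R}(\text{energy density})\big)^{1/2}$ up to the $\Lambda$-independent constant, and then undoing the normalization and collecting the zeroth-order term $\fint_{B_R}|u|$ (which enters the base case of the iteration and the lower-order terms of Caccioppoli) gives \eqref{est:P:bound2d} with a purely numerical constant $c$, with no dependence on $\Lambda(B_R)$ — this is the qualitative improvement over Theorem~\ref{T:bound}, and it is possible precisely because in $d=2$ one need not give up any integrability to apply the cutoff lemma.

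The main obstacle is bookkeeping the two places the energy $\int a\nabla u\cdot\nabla u$ appears with matching powers: making the absorption/hole-filling argument in Step~2 rigorous without picking up a constant that depends on $\Lambda(B_R)$ requires that the small parameter one absorbs be genuinely universal, which in turn relies on the scale-invariant form of Lemma~\ref{L:optimcutoff} and on keeping $R\lambda^{-1/2}$-weighted norms throughout rather than naive $L^1$ norms of $\mu$. The Moser iteration itself is routine once the $d=2$ endpoint Sobolev inequality is in hand.
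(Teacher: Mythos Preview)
Your approach via Moser iteration has a genuine obstruction. The iteration in the proof of Theorem~\ref{T:bound} gains a factor $\chi=1+\delta$ per step with $\delta=\min\{\frac{1}{d-1}-\frac{1}{2p},\frac12\}-\frac{1}{2q}$; for $d=2$ and $p=q=1$ this gives $\delta=0$, so the iteration stalls. Concretely: after Lemma~\ref{L:optimcutoff} with $p_*=1$ and H\"older against $\lambda^{-1}\in L^1$ (so $\frac{2q}{q+1}=1$), you only obtain $\|\nabla(u_+^\alpha)\|_{L^1(B_\rho)}\lesssim \Lambda^{1/2}(\sigma-\rho)^{-2}\alpha\,\|u_+^\alpha\|_{W^{1,1}(B_\sigma)}$, with the \emph{same} exponent $\alpha$ on both sides. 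The embedding $W^{1,1}(\R^2)\hookrightarrow L^2$ does give control of $\|u^{2\alpha}\|_{L^1}$, but closing the loop requires $\|\nabla u^{2\alpha}\|_{L^1}$, and applying Caccioppoli plus the cutoff lemma at exponent $2\alpha$ reproduces $\|u^{2\alpha}\|_{W^{1,1}}$ on the right --- circular. Your assertion that ``the gain factor per step is bounded below by a fixed ratio $>1$'' is therefore unjustified; this is exactly the degeneration flagged in Remark~\ref{rem:bound2d}, where the constant in \eqref{est:C:bound} blows up as $\delta\to0$. Your Step~2 has the same defect: the factor multiplying the reintroduced energy on the right is $\|\mu\|_{L^1}\|\lambda^{-1}\|_{L^1}$, not a universal small number, so hole-filling cannot absorb it without $\Lambda$-dependence --- the scale-invariance of Lemma~\ref{L:optimcutoff} does not help here.

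The paper's proof takes an entirely different and much shorter route, with no iteration. It uses the weak maximum principle $\sup_{B_\rho}u\leq\sup_{\partial B_\rho}u_+$ (valid already under $\lambda^{-1}\in L^1$), then selects by Fubini a radius $\rho\in(\tfrac{R}2,R)$ on whose circle $\int_{\partial B_\rho}(u_++|\nabla u_+|)$ is bounded by twice the annular integral, and finally applies the one-dimensional Sobolev inequality $\|u_+\|_{L^\infty(\partial B_\rho)}\lesssim \int_{\partial B_\rho}(u_++|\nabla u_+|)$. A single Cauchy--Schwarz against $\lambda^{-1/2}$ then yields the energy term in \eqref{est:P:bound2d}. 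The key point is that for $d=2$ the sphere is one-dimensional, so one has $W^{1,1}\hookrightarrow L^\infty$ on each circle --- this endpoint embedding, not $W^{1,1}(\R^2)\hookrightarrow L^2$, is what makes the borderline case work.
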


\begin{remark}\label{rem:bound}
A version of Theorem~\ref{T:bound} under the more restrictive integrability condition $\frac1p+\frac1q<\frac2d$ can be found in \cite[Theorem~3.1]{T71}. In view of a counterexample presented in \cite{FSS98}, assumption \eqref{eq:pq} $\frac1p+\frac1q<\frac2{d-1}$ used in Theorem~\ref{T:1} and Theorem~\ref{T:bound} is essentially optimal. Indeed, in \cite[Theorem~2]{FSS98} for every $p,q>1$ satisfying $\frac1p+\frac1q>\frac2{d-1}$ the authors construct a weight $\omega$ with $\omega^{-1}\in L^p(B_1)$ and $\omega\in L^q(B_1)$, and an unbounded weak solution of $-\nabla \cdot \omega\nabla u=0$ in $B_1$ provided $d\geq4$. In fact in \cite[Theorem~2]{FSS98} only the case $d=4$ is considered but the extension to $d\geq5$ is straightforward. In general we cannot say anything about the borderline situation $\frac1p+\frac1q=\frac2{d-1}$, except for the special case $d=2$. 
\end{remark}
Our proof of Theorem~\ref{T:bound} is similar to that of \cite[Theorem~3.1]{T71} and relies on a modification of the Moser iteration method \cite{Moser60,Moser61}. Let us now briefly highlight the main difference of our approach compared to the arguments given in \cite{T71} and discuss from where our improvement comes from. A simple consequence of the H\"older and Sobolev inequality combined with the relation $\frac1p+\frac1q<\frac2d$ is the following weighted Poincar\'e inequality: there exists $\kappa=\kappa(p,q,d)>1$ (in fact $\frac1\kappa=\frac{p}{p-1}(1+\frac1q-\frac2d)$) such that for any ball $B_R$ and $u$ with compact support in $B_R$
\begin{equation}\label{weighted:sobolev}
\left(\fint_{B_R} \mu |u|^{2\kappa}\right)^\frac{1}{\kappa}\leq cR^2\left(\fint_{B_R}\mu^p\right)^\frac1{\kappa p}\left(\fint_{B_R}\lambda^{-q}\right)^\frac1q\fint_{B_R} a\nabla u\cdot\nabla u
\end{equation}
where $c=c(d)\in[1,\infty)$. Inequality \eqref{weighted:sobolev} and the Caccioppoli inequality are enough to use Moser's iteration argument to prove local boundedness. In the situation of Theorem~\ref{T:bound}, i.e.\ with the relaxed assumption \ $\frac1p+\frac1q<\frac2{d-1}$, we do not have a weighted Poincar\'e inequality in the form of \eqref{weighted:sobolev} at hand. However, a version of \eqref{weighted:sobolev} is valid if we replace the $d$-dimensional balls by $(d-1)$-dimensional spheres. In order to exploit this observation, we need an additional optimization step compared to the usual Caccioppoli inequality which is gathered in Lemma~\ref{L:optimcutoff}. The argument for Proposition~\ref{P:bound:2d} is different and in fact much simpler. It is mainly based on the maximum principle and Sobolev inequality in one dimension, see \cite[Proposition~1]{FSS98} for a similar argument.

\begin{proof}[Proof of Theorem~\ref{T:bound}]

Throughout the proof we write $\lesssim$ if $\leq$ holds up to a positive constant that depends only on $d,p$ and $q$.

\step 1 We prove \eqref{est:T:bound} for $\theta=\frac14$, $R=2$ and $\gamma\geq 2p'$, i.e.\ for every $\gamma\geq 2p'$ there exists $c=c(\gamma,d,p,q)\in[1,\infty)$ such that
\begin{equation}\label{est:claim:s1:moser}
 \|u_+\|_{L^\infty(B_{\frac12})}\leq c\Lambda(B_2)^{\frac{p'}{\gamma}(1+\frac1\delta)}\|u_+\|_{L^\gamma(B_2)}.
\end{equation}
For $\beta\geq1$ and $N\in(0,\infty)$, we define
\begin{equation}
 F(u):=F_\beta^N(u)=\begin{cases}(u_+)^\beta &\mbox{for $u\leq N$}\\\beta N^{\beta-1}u-(\beta-1)N^\beta&\mbox{for $u\geq N$}\end{cases}.
\end{equation}
Set $\phi:=\eta^2 F(u)$ with $\eta\geq0$, $\eta\in C_0^1(B_2)$. By \eqref{def:harmonic}, we obtain
\begin{equation}
 \int_\Omega \eta^2 F'(u) a\nabla u\cdot \nabla u\leq -2 \int_\Omega \eta F(u) a\nabla u\cdot \nabla \eta.
\end{equation}
Definition \eqref{def:lmu} (in particular $|a\nabla u|\leq (\mu a\nabla u\cdot \nabla u)^\frac12$ and $a\nabla u\cdot\nabla u\geq\lambda|\nabla u|^2$), Young's inequality and convexity of $F$ in the form of $F(u)\leq u F'(u)$ yield
\begin{equation}\label{est:moser:1}
 \int_\Omega \eta^2 F'(u) \lambda|\nabla u|^2\leq 4 \int_\Omega  F'(u) u^2  \mu|\nabla \eta|^2.
\end{equation}
We rewrite estimate \eqref{est:moser:1} as
\begin{equation}\label{eq:G}
 \int_\Omega \eta^2 \lambda |\nabla G|^2 \leq 4 \int_\Omega  (u G'(u))^2 \mu |\nabla \eta|^2,\qquad\mbox{where}\quad G(u):=\int_0^u |F'(t)|^\frac12 \,dt.
\end{equation}
Fix $\frac12\leq\rho<\sigma\leq2$. We optimize the right-hand side of \eqref{eq:G} with respect to $\eta$ satisfying $\eta\in C_0^1(B_\sigma)$ and $\eta=1$ in $B_\rho$: we use Lemma~\ref{L:optimcutoff}, which by H\"older's inequality implies
\begin{align}\label{est:G1}
 \|\nabla G\|_{L^\frac{2q}{q+1}(B_\rho)}^2\leq \|\lambda^{-1}\|_{L^q(B_\rho)}\int_\Omega \eta^2 \lambda|\nabla G|^2\lesssim  \Lambda(B_\sigma) (\sigma-\rho)^{-\frac{2d}{d-1}}\|u G'(u)\|_{W^{1,p_*}(B_\sigma)}^2,
\end{align}
where $\frac1{p_*}=\min\{\frac12-\frac1{2p}+\frac1{d-1},1\}$ as in Lemma~\ref{L:optimcutoff}. Notice that in order to apply Lemma~\ref{L:optimcutoff}, we used $G'(u)u\in W^{1,p_*}(B_\sigma)$. This is a consequence of $p_*<\frac{2q}{q+1}$ (by \eqref{eq:pq}),  $u\in W^{1,\frac{2q}{q+1}}(\Omega)$ (by H\"older inequality and $u\in H^1(\Omega,a)$), and the chain rule for Sobolev functions, see e.g.\ \cite[Theorem~7.8]{GT}. Set $\alpha=\frac{\beta+1}2$. Then passing with $N\to \infty$ in \eqref{est:G1} (and thus $G(u)=\frac{\sqrt{\beta}}{2\alpha}(u_+^\alpha)$ and $uG'(u)=\sqrt{\beta}(u_+^\alpha)$), we obtain
\begin{align}\label{est:G2}
  \|\nabla (u_+^\alpha)\|_{L^{\frac{2q}{q+1}}(B_\rho)} \lesssim \Lambda(B_\sigma)^\frac12 (\sigma-\rho)^{-\frac{d}{d-1}}\alpha \|u_+^\alpha\|_{W^{1,p_*}(B_\sigma)}.
\end{align}
For future usage, we note that if we choose $\eta\in C_0^1(B_\sigma)$ with $\eta=1$ in $B_\rho$ and $\|\nabla\eta\|_{L^\infty}\leq 2(\sigma-\rho)^{-1}$, estimate \eqref{eq:G}, H\"older inequality and the choice of $G$ as above yield 
\begin{align}\label{est:G3}
  \|\nabla (u_+^\alpha)\|_{L^{\frac{2q}{q+1}}(B_\rho)} \lesssim \Lambda(B_\sigma)^\frac12 (\sigma-\rho)^{-1}\alpha \|(u_+^\alpha)\|_{L^{2p'}(B_\sigma)},
\end{align}
with $p'=\frac{p}{p-1}$. Let us now return to \eqref{est:G2}. Notice that condition \eqref{eq:pq} implies $\frac{2q}{q+1}>p_*$ and thus \eqref{est:G2} contains an improvement in integrability of $\nabla u^\alpha$. 

H\"older's inequality with exponent $\frac{2q}{(q+1){p_*}}$ yields with $\delta=\min\{\frac{1}{d-1} - \frac{1}{2p},\frac12\} - \frac{1}{2q}>0$ (notice \eqref{eq:pq} and $q>1$ imply $\delta>0$) 
\begin{align}\label{est:chi1}
 \biggl( \int_{B_\rho} |\nabla(u_+^{\alpha(1+\delta)})|^{p_*} \biggr)^{\frac 1{p_*}} =& \alpha(1+\delta) \biggl( \int_{B_\rho} |\nabla u_+|^{p_*} u_+^{(\alpha-1){p_*}} u_+^{\alpha\delta {p_*}} \biggr)^{\frac 1{p_*}}\notag\\
  \leq&(1+\delta) \biggl( \int_{B_\rho} |\nabla u_+^\alpha|^\frac{2q}{q+1} \biggr)^{\frac{q+1}{2q}} \biggl( \int_{B_\rho} u_+^\alpha \biggr)^\delta.
\end{align}
Hence, by \eqref{est:G2} with $\chi:=1+\delta>1$,
\begin{align}\label{est:chi2}
  \|\nabla (u_+^{\alpha\chi})\|_{L^{{p_*}}(B_\rho)} \lesssim (\sigma-\rho)^{-\frac{d}{d-1}}\Lambda(B_\sigma)^\frac12\alpha\chi \|u_+^\alpha\|_{W^{1,p_*}(B_\sigma)}^{\chi}.
\end{align}
By Sobolev inequality (using ${p_*} \ge 1$ and $\chi\leq \frac{d}{d-1}$) we get that 
\begin{equation}\label{est:chi3}
 \| u_+^{\alpha\chi} \|_{L^{p_*}(B_\rho)}\lesssim\|u_+^\alpha \|_{W^{1,p_*}(B_\rho)}^{\chi},
\end{equation}
and thus there exists $c=c(d,p,q)\in[1,\infty)$ such that for every $\alpha\geq1$
\begin{align}\label{est:moser:basis}
  \|u_+^{\alpha\chi}\|_{W^{1,p_*}(B_\rho)}^\frac1{\alpha\chi} \leq \left(\frac{c \Lambda(B_\sigma)^\frac12\alpha\chi}{(\sigma-\rho)^{\frac{d}{d-1}}} \right)^\frac1{\alpha\chi}\|u_+^\alpha\|_{W^{1,p_*}(B_\sigma)}^\frac1\alpha.
\end{align}
Estimate \eqref{est:moser:basis} can be iterated in the usual way: Fix $\bar\alpha\geq1$ and for $\nu\in\mathbb N$, set $\alpha_\nu=\bar\alpha \chi^{\nu-1}$, $\rho_\nu=\frac12+\frac1{2^{\nu+1}}$, $\sigma_\nu:=\rho_\nu+\frac1{2^{\nu+1}}=\rho_{\nu-1}$ (where $\rho_0:=1$), and \eqref{est:moser:basis} reads
\begin{align*}
& \|u_+^{\bar \alpha\chi^\nu}\|_{W^{1,p_*}(B_{\rho_\nu})}^\frac1{\bar \alpha\chi^\nu} \leq \left(c \Lambda(B_1)^\frac12\bar \alpha (4\chi)^\nu \right)^\frac1{\bar\alpha \chi^{\nu}}\|u_+^{\bar \alpha \chi^{\nu-1}}\|_{W^{1,p_*}(B_{\rho_{\nu-1}})}^\frac1{\bar\alpha\chi^{\nu-1}}
\end{align*}
and thus
\begin{align}\label{est:moser:almostfinal}
 \|u_+\|_{L^\infty(B_\frac12)}\leq& \prod_{\nu=1}^\infty \left(c\bar \alpha \Lambda(B_1)^\frac12(4\chi)^\nu \right)^{\frac1{\bar \alpha \chi^{\nu}}}\|u_+^{\bar \alpha }\|_{W^{1,p_*}(B_1)}^\frac1{\bar\alpha}\notag\\
 =&\left(c\bar \alpha \Lambda(B_1)^\frac12\right)^{\frac1{\bar \alpha}\sum_{\nu=1}^\infty \frac1{\chi^{\nu}}}\left(4\chi\right)^{\frac1{\bar \alpha}\sum_{\nu=1}^\infty \frac\nu{\chi^{\nu}}}\| u_+^{\bar \alpha}\|_{W^{1,p_*}(B_1)}^{\frac1{\bar \alpha}}.
\end{align}
To estimate the right-hand side of \eqref{est:moser:almostfinal}, we use \eqref{est:G3} and the fact that $p_*<\frac{2q}{q+1}\leq 2\leq 2p'$ 
\begin{align*}
\|\nabla (u_+^{\bar \alpha})\|_{L^{p_*}(B_1)}\lesssim \|\nabla (u_+^{\bar \alpha})\|_{L^{\frac{2q}{q+1}}(B_1)}\lesssim \Lambda(B_2)^\frac12 \bar \alpha\|u_+^{\bar \alpha} \|_{L^{2p'}(B_2)},\qquad \|u_+^{\bar \alpha}\|_{L^{p_*}(B_1)}\lesssim\|u_*^{\bar\alpha}\|_{L^{2p'}(B_2)}.
\end{align*}
Since $\bar \alpha,\Lambda\geq1$ and $\sum_{\nu=1}^\infty \nu \chi^{-\nu}\lesssim1$, we obtain that there exists $c=c(d,p,q,\bar \alpha)\in[1,\infty)$ such that
\begin{equation}
 \|u_+\|_{L^\infty(B_\frac12)}\leq c \Lambda(B_1)^{\frac{1}{2\bar \alpha}(\frac1{1-\chi^{-1}}-1)}\|u_+^{\bar \alpha}\|_{W^{1,p_*}(B_1)}^\frac{1}{\bar \alpha}\lesssim c{\bar \alpha}^\frac1{\bar \alpha}\Lambda(B_2)^{\frac{1}{2\bar \alpha}\frac{\chi}{\chi-1}}\|u_+\|_{L^{2\bar \alpha p'}(B_2)},
\end{equation}
which proves the claim by setting $\gamma=2p'\bar \alpha\geq2p'$ (recall $\chi=1+\delta$).

\step 2 The general case. It is well-known how to lift the result of Step~1 to prove the claim. For convenience of the reader we provide the arguments following the presentation in \cite{HL}. First, by scaling we deduce from \eqref{est:claim:s1:moser} that for $\gamma\geq 2p'$ and $R>0$ 
\begin{equation}\label{est:claim:s1:moser1}
 \|u_+\|_{L^\infty(B_{\frac{R}4})}\leq c\Lambda(B_R)^{\frac{p'}{\gamma}(1+\frac1\delta)}R^{-\frac{d}\gamma}\|u_+\|_{L^\gamma(B_R)},
\end{equation}
where $c=c(\gamma,d,p,q)\in[1,\infty)$ is the same as in \eqref{est:claim:s1:moser}. Now the statement for $\gamma\geq 2p'$ follows by applying for every $y\in B_{\theta R}$ estimate \eqref{est:claim:s1:moser1} with $B_R$ replaced by $B_{(1-\theta)R}(y)$, i.e.
\begin{equation*}
 \|u_+\|_{L^\infty(B_{\frac{1-\theta}4R}(y))}\leq \frac{c\Lambda(B_{(1-\theta)R}(y))^{\frac{p'}{\gamma}(1+\frac1\delta)}}{((1-\theta)R)^{\frac{d}\gamma}}\|u_+\|_{L^\gamma(B_{(1-\theta)R}(y))}\leq \frac{c\Lambda(B_{R})^{\frac{p'}{\gamma}(1+\frac1\delta)}}{(1-\theta)^{\frac{d}\gamma s}R^{\frac{d}\gamma}}\|u_+\|_{L^\gamma(B_R)},
\end{equation*}
where $s=1+p'(1+\frac{1}{\delta})(\frac1p+\frac1q)$ and thus
\begin{equation}\label{est:T:boundbig}
 \|u_+\|_{L^\infty(B_\theta)}\leq \frac{c\Lambda(B_{R})^{\frac{p'}{\gamma}(1+\frac1\delta)}}{(1-\theta)^{\frac{d}\gamma s}R^{\frac{d}\gamma}}\|u_+\|_{L^\gamma(B_R)}\qquad\mbox{for every $\gamma\geq 2p'$.}
\end{equation}

\smallskip

Hence, it remains to prove estimate \eqref{est:T:bound} for $\gamma\in(0,2p')$. For given $\gamma\in(0,2p')$, we first observe that
\begin{equation*}
\| u_+\|_{L^{2p'}(B_R)}\leq \|u_+\|_{L^\infty(B_R)}^{1-\frac{\gamma}{2p'}}\|u_+\|_{L^\gamma(B_R)}^\frac{\gamma}{2p'}
\end{equation*}
and thus by \eqref{est:T:boundbig} (with $\gamma=2p'$) and Youngs inequality
\begin{equation}\label{est:M:afinal}
\begin{split}
\|u_+\|_{L^\infty(B_{\theta R})}\leq& \frac{c\Lambda(B_R)^{{\frac1{2}(1+\frac1\delta)}}}{(1-\theta)^{\frac{d}{2p'}s} R^{\frac{d}{2p'}}}\|u_+\|_{L^{2p'}(B_R)}\leq\frac{c\Lambda(B_R)^{{\frac1{2}(1+\frac1\delta)}}}{(1-\theta)^{\frac{d}{2p'}s} R^{\frac{d}{2p'}}}\|u_+\|_{L^\infty(B_R)}^{1-\frac{\gamma}{2p'}}\|u_+\|_{L^\gamma(B_R)}^\frac{\gamma}{2p'}\\
\leq& \frac12 \|u_+\|_{L^\infty(B_R)}+(2c)^{\frac{2p'}{\gamma}}\frac{\Lambda(B_R)^{{\frac{p'}{\gamma}(1+\frac1\delta)}}}{(1-\theta)^{\frac{d}\gamma s}R^\frac{d}{\gamma}}\|u_+\|_{L^\gamma(B_R)},
\end{split}
\end{equation}
where $c=c(d,p,q)\in[1,\infty)$. Set $f(t):=\|u_+\|_{L^\infty(B_{t})}$, $t\in(0,1]$. The estimate \eqref{est:M:afinal} implies that there exists $c=c(\gamma,d,p,q)\in[1,\infty)$ such that for all $0<r< R\leq 1$
\begin{equation}
f(r)\leq \frac12 f( R)+\frac{c\Lambda(B_R)^{\frac{p'}{\gamma}(1+\frac1\delta)}}{(1-\frac{r}R)^{\frac{d}\gamma s}R^{\frac{d}\gamma}}\|u_+\|_{L^\gamma(B_R)}\leq \frac12 f( R)+\frac{c\Lambda(B_1)^{\frac{p'}{\gamma}(1+\frac1\delta)}}{(R-r)^{\frac{d}\gamma s}}\|u_+\|_{L^\gamma(B_1)}.
\end{equation}
Hence, by \cite[Lemma~4.3]{HL}, we find $c=c(\gamma,d,p,q)\in[1,\infty)$ such that for all $0<r<R<1$,
\begin{equation}
\|u_+\|_{L^\infty(B_{r})}\leq \frac{c\Lambda(B_1)^{\frac{p'}{\gamma}(1+\frac1\delta)}}{(R-r)^{\frac{d}\gamma s}}\|u_+\|_{L^\gamma(B_1)},
\end{equation}
and the claim \eqref{est:T:bound} (with $\theta=r$ and $R=1$) follows.
\end{proof}

\begin{proof}[Proof of Proposition~\ref{P:bound:2d}]

Clearly it suffices to show that every weak subsolution $u$ of \eqref{eq} is locally bounded from above and there exists $c\in[1,\infty)$ such that for any ball $B_R\subset\Omega$ it holds
\begin{equation}\label{est:T:bound:2d}
 \sup_{B_\frac{R}2}u\leq c\left(R\left(\fint_{B_R}\lambda^{-1}\right)^\frac12\left(\fint_{B_R}a\nabla u_+\cdot \nabla u_+\right)^\frac12+\fint_{B_R}u_+\right).
\end{equation}
The maximum principle, see \cite[Theorem~3.1]{T73}, yields
\begin{equation}\label{eq:maxp}
 \sup_{B_R}u\leq \sup_{\partial B_R} u_+\qquad\mbox{for every $B_R\subset \Omega$.}
\end{equation}
In \cite{T73}, the maximum principle \eqref{eq:maxp} is proven for much more general equations. For convenience of the reader we recall the argument for the specific situation considered here at the end of the proof. Next, we prove \eqref{est:T:bound:2d} for $R=1$ (the general case follows by scaling). In view of Fubini's theorem, we find $\rho\in(\frac12,1)$ such that
\begin{equation}\label{eq:slicegood}
 \int_{\partial B_\rho} u_++|\nabla u_+|\leq 2\int_{B_1\setminus B_\frac12} u_++|\nabla u_+|.
\end{equation}
Hence, by the Sobolev inequality in one dimension
\begin{align*}
 \sup_{B_\frac12}u\stackrel{\frac12\leq\rho}{\leq}& \sup_{B_\rho}u\stackrel{\eqref{eq:maxp}}{\leq} \sup_{\partial B_\rho}u_+\lesssim \int_{\partial B_\rho}u_++|\nabla u_+|\stackrel{\eqref{eq:slicegood}}{\leq}2 \int_{B_1} u_++|\nabla u_+|\\
 \leq& 2\|u_+\|_{L^1(B_1)}+2\|\lambda^{-1}\|_{L^1(B_1)}^\frac12\int_{B_1} a\nabla u_+\cdot \nabla u_+,
\end{align*}
where the last inequality follows by H\"older's inequality and \eqref{def:lmu} (in the form $\lambda |\nabla u_+|^2\leq a\nabla u_+\cdot \nabla u_+$).

Finally, we recall the argument of \cite{T73} for \eqref{eq:maxp}. Set $\varphi:=(u-\sup_{\partial B_R} u_+)_+$. Since $\varphi\in H_0^1(B_R,a)$ and $\varphi\geq0$, we can use $\varphi$ as a test function in \eqref{def:harmonic} and obtain
\begin{align*}
0\geq\int_{B_R}a\nabla u\cdot \nabla\varphi\geq\int_{B_R}\lambda|\nabla \varphi|^2\geq \|\lambda^{-1}\|_{L^1(B_R)}^{-1}\|\nabla \varphi\|_{L^1(B_R)}^2,
\end{align*}
and thus, by Poincar\'e inequality and $\varphi=0$ on $\partial B_R$, we obtain $\varphi=0$ and consequently \eqref{eq:maxp}.
\end{proof}

\section{Harnack inequality -- proof of part (ii) of Theorem~\ref{T:1} and some applications}\label{sec:harnack}

The main result of this section is the following
\begin{theorem}[Weak Harnack inequality]\label{T:weakharnack}
Fix $d\geq2$, a domain $\Omega\subset\R^d$ and $p,q\in(1,\infty]$ satisfying \eqref{eq:pq}. Let $a:\Omega\to\R^{d\times d}$ be such that $\lambda$ and $\mu$ given in \eqref{def:lmu} are non-negative and satisfy $\frac1\lambda\in L^q(\Omega)$, $\mu\in L^p(\Omega)$. Let $u$ be a non-negative weak supersolution of \eqref{eq} in $\Omega$. Then, for every $0<\theta<\tau<1$, $\gamma\in(0,\frac{q_*}2)$, where $\frac1{q_*}=\frac12+\frac1{2q}-\frac1d$ if $1+\frac1q>\frac2d$ and $q_*=+\infty$ otherwise (i.e.\ if $d=2$ and $q=\infty$), and any $B_R\subset \Omega$  there exists $C=C(d,p,q,\theta,\tau,\gamma,\Lambda(B_R))\in[1,\infty)$ such that
% with the understanding $\frac\infty2=\infty=\frac10$
\begin{equation}\label{eq:weakharnack}
 \left(\frac{1}{R^d}\int_{B_{\tau R}} u^\gamma\right)^\frac1\gamma\leq C\inf_{B_{\theta R}} u.
\end{equation}
In fact, the constant $C$ in \eqref{eq:weakharnack} satisfies $C\leq c_1 e^{\Lambda(B_R)c_2}$ with $c_1=c_1(\gamma,d,p,q,\tau,\theta)\in[1,\infty)$ and $c_2=c_2(\gamma,d,p,q)>0$.
\end{theorem}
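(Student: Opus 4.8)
The plan is to adapt Moser's iteration for supersolutions to the non-uniformly elliptic setting, in the spirit of \cite{T71} but using the optimized cutoff from Lemma~\ref{L:optimcutoff} (as in the proof of Theorem~\ref{T:bound}) to replace the missing weighted Poincar\'e inequality. The proof splits naturally into two parts: (a) an estimate on \emph{negative} powers of $u$, showing $\big(\fint_{B_{\tau R}} u^{-\gamma}\big)^{-1/\gamma}\gtrsim \inf_{B_{\theta R}} u$ for any $\gamma>0$, obtained by running the iteration with $F(u)=u^\beta$ for $\beta<0$; and (b) the crossover estimate linking small positive powers to small negative powers, i.e.\ $\big(\fint u^{\gamma_0}\big)^{1/\gamma_0}\lesssim \big(\fint u^{-\gamma_0}\big)^{-1/\gamma_0}$ for some small $\gamma_0>0$, via a John--Nirenberg / Bombieri--Giusti type argument applied to $\log u$. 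Part (a) together with Theorem~\ref{T:bound} applied to $u^{-1}$ (which is a subsolution wherever $u>0$, after a regularization $u\mapsto u+\e$) would actually give the full chain once (b) is in place; but since \eqref{eq:weakharnack} is stated for the full range $\gamma\in(0,q_*/2)$, I also need a positive-power iteration to bootstrap from $\gamma_0$ up to any $\gamma<q_*/2$, which is where the exponent $q_*$ enters.

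Concretely, first I would fix $u_\e:=u+\e\Lambda(B_R)^{?}$ (a suitable $\e$-regularization that keeps the relevant norms controlled) to ensure all powers are well-defined, and record the Caccioppoli-type inequality: testing \eqref{def:harmonic} with $\phi=\eta^2 u_\e^{2\beta-1}$ for $\beta\neq\frac12$ and using $|a\nabla u|\le(\mu\,a\nabla u\cdot\nabla u)^{1/2}$, $a\nabla u\cdot\nabla u\ge\lambda|\nabla u|^2$, Young's inequality yields, with $w:=u_\e^\beta$,
\begin{equation*}
 \int \eta^2\lambda|\nabla w|^2 \lesssim \Big(\tfrac{\beta}{\beta-\frac12}\Big)^2 \int w^2\,\mu|\nabla\eta|^2 .
\end{equation*}
Then I optimize the right-hand side over $\eta$ using Lemma~\ref{L:optimcutoff} exactly as in \eqref{est:G1}--\eqref{est:G2}, producing $\|\nabla w\|_{L^{2q/(q+1)}(B_\rho)}\lesssim \Lambda(B_\sigma)^{1/2}(\sigma-\rho)^{-d/(d-1)}|\tfrac{\beta}{\beta-1/2}|\,\|w\|_{W^{1,p_*}(B_\sigma)}$; combined with the self-improving H\"older trick \eqref{est:chi1}--\eqref{est:chi2} and Sobolev \eqref{est:chi3} this gives the iteration inequality \eqref{est:moser:basis} with $w$ in place of $u_+^\alpha$ and a $\beta$-dependent constant. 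For $\beta\le \tfrac14$ (so $|\beta/(\beta-\tfrac12)|\le 1$) and $\beta<0$ the constants are uniformly controlled and iterating $\chi=1+\delta$ as before yields $\sup_{B_{\theta R}} u_\e^{-1}\lesssim \big(\fint_{B_{\tau R}} u_\e^{-\gamma}\big)^{1/\gamma}$, i.e.\ part (a). For part (b), I would derive from the $\beta=\tfrac12$ endpoint of the Caccioppoli inequality a BMO-type bound $\fint_{B_r}|\log u_\e - (\log u_\e)_{B_r}|\lesssim \Lambda$, via a weighted Poincar\'e inequality on spheres plus the slicing/one-dimensional Sobolev inequality underlying Lemma~\ref{L:optimcutoff}; then the Bombieri--Giusti lemma (an abstract measure-theoretic statement combining the iteration inequalities for positive and negative powers with the $\log$-BMO bound) converts this into the crossover $\big(\fint u^{\gamma_0}\big)^{1/\gamma_0}\lesssim \big(\fint u^{-\gamma_0}\big)^{-1/\gamma_0}$. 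Finally, the positive-power iteration for $0<\gamma_0\le\beta$ — run with $F(u)=u^\beta$, $0<\beta<\tfrac12$ — bootstraps $\gamma_0$ up to any $\gamma<q_*/2$; the restriction $\gamma<q_*/2$ and the formula $\tfrac1{q_*}=\tfrac12+\tfrac1{2q}-\tfrac1d$ come from needing $L^{\gamma}$ on the right to absorb into $L^{2q/(q+1)}$-gradient control through the Sobolev embedding in $d$ dimensions (this is the one place a genuinely $d$-dimensional, not $(d-1)$-dimensional, Sobolev inequality is used, since there is no cutoff to exploit sphere-by-sphere).

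Tracking constants: each iteration contributes a factor $(c\alpha\chi^\nu\Lambda^{1/2}(\sigma-\rho)^{-d/(d-1)})^{1/(\alpha\chi^\nu)}$, and since $\sum \nu\chi^{-\nu}<\infty$ the product telescopes to a bound of the form $c_1\Lambda^{c}$ for the one-step estimates; the genuinely exponential factor $e^{c_2\Lambda}$ in the statement enters only through the Bombieri--Giusti crossover, where $\Lambda$ appears in the exponent of a John--Nirenberg-type inequality $\fint e^{c|\log u - (\log u)|}\lesssim 1$, valid once $c\lesssim 1/\Lambda$ — hence the claimed $C\le c_1 e^{c_2\Lambda(B_R)}$. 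The main obstacle I anticipate is precisely this crossover step: establishing the $\log$-BMO estimate for $\log u_\e$ in the non-uniformly elliptic regime requires a weighted Poincar\'e inequality relating $\int_{B_r}\mu^{?}|\log u_\e - \text{avg}|$ to $\int \lambda|\nabla\log u_\e|^2$, and getting the right powers of $\mu$ and $\lambda$ so that the mean value can be taken with respect to Lebesgue measure (which is what Bombieri--Giusti needs) rather than a weighted measure is delicate — this is the analog of the difficulty that forced the sphere-by-sphere argument in Lemma~\ref{L:optimcutoff}, and I expect to again route through slicing and the one-dimensional/$( d-1)$-dimensional Sobolev inequality. The rest — the two Moser iterations and assembling \eqref{eq:weakharnack} from (a), (b) and the positive bootstrap, then combining with Theorem~\ref{T:bound} to get the strong Harnack inequality \eqref{est:harnack:strong} — is routine modulo careful bookkeeping of the $\e\to0$ limit and the dependence of all constants on $\Lambda(B_R)$.
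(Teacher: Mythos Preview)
Your three-part outline (negative-power iteration, crossover via $\log u$, positive bootstrap up to $\gamma<q_*/2$) matches the paper's structure, and the use of Lemma~\ref{L:optimcutoff} in the Caccioppoli step is exactly right. The paper differs in two implementation details. First, for the infimum bound it does not iterate $u^\beta$ with $\beta<0$ (nor apply Theorem~\ref{T:bound} to $u^{-1}$, though that would also work); instead it observes that $v:=\log(k/u)$ is itself $a$-subharmonic and applies Theorem~\ref{T:bound} to $v$ with $\gamma=q_*$, combined with a single-scale estimate $\|v\|_{L^{q_*}(B_\tau)}\lesssim\|\lambda^{-1}\|_{L^q}^{1/2}(\int\lambda|\nabla v|^2)^{1/2}$ obtained from the test function $\eta^2/u$. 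Second --- and this is where your anticipated obstacle bites --- the paper avoids John--Nirenberg/Bombieri--Giusti entirely for the crossover. The difficulty is real but its root is not quite what you describe: the one-ball estimate $\fint_{B_r}|\log u-(\log u)_{B_r}|\lesssim\Lambda(B_r)^{1/2}$ follows from Caccioppoli\,+\,H\"older\,+\,$d$-dimensional Sobolev without any sphere slicing; the problem is that $\Lambda(B_r)=(\fint_{B_r}\mu^p)^{1/p}(\fint_{B_r}\lambda^{-q})^{1/q}$ is \emph{not} controlled by $\Lambda(B_R)$ for small sub-balls $B_r\subset B_R$, so the uniform-in-$r$ BMO bound that John--Nirenberg needs is simply unavailable under the hypotheses.

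The paper's workaround is Trudinger's direct moment method: test \eqref{def:harmonic} with $\phi=\eta^2 u^{-1}\bigl(w^\beta+(2\beta)^\beta\bigr)$ where $w:=(\log(u/k))_+$ and $k=\exp\fint_{B_\tau}\log u$; this yields a Caccioppoli inequality for $w^{(\beta+1)/2}$ with an additive inhomogeneity $(2\beta)^\beta$, and iterating via Lemma~\ref{L:optimcutoff} gives $\|w\|_{L^s(B_\theta)}\le c_1\Lambda(B_R)^{c_2}\,s$ for all $s\ge1$. Summing the exponential series yields $\int_{B_\theta}e^{s_0 w}\le 2$ for $s_0\sim\Lambda(B_R)^{-c_2}$, hence $(\int u^{s_0})^{1/s_0}\lesssim k$ --- all on the fixed ball, never touching $\Lambda(B_r)$ for $r\ll R$. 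The positive bootstrap and the source of the exponential $\Lambda$-dependence (through $1/s_0$) are as you describe.
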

\begin{proof}[Proof of part (ii) of Theorem~\ref{T:1}]
 Notice $q_*>1$ for every $d\geq2$. Hence, combining the local boundedness estimate \eqref{est:T:bound} with $\gamma=\frac12$ and Theorem~\ref{T:weakharnack} ($q_*>1$ allows $\gamma=\frac12$), we obtain
\begin{equation*}
\sup_{B_\frac{R}2} u\stackrel{\eqref{est:T:bound}}{\leq} c\Lambda(B_R)^{2p'(1+\frac1\delta)}\left(\frac1{R^d}\int_{B_{\frac34 R}}u^\frac12\right)^2\stackrel{\eqref{eq:weakharnack}}{\leq}  c\Lambda(B_R)^{2p'(1+\frac1\delta)}C\inf_{B_\frac{R}2}u,
\end{equation*} 
with $c=c(d,p,q)\in[1,\infty)$ and $C=C(d,p,q,\Lambda(B_R))\in[1,\infty)$ which proves the claim.
\end{proof}

In \cite{T71}, Trudinger proved the conclusion of Theorem~\ref{T:weakharnack} under the more restrictive assumption $\frac1p+\frac1q<\frac2d$. We prove Theorem~\ref{T:weakharnack} by combining the strategy of Trudinger in the proof of \cite[Theorem 4.1]{T71} with the local boundedness result Theorem~\ref{T:bound} and an improved Caccioppoli inequality due to Lemma~\ref{L:optimcutoff}. Even though experts might already anticipate how to adapt the arguments of \cite{T71}, we give a detailed proof at the end of this section. 

\smallskip

Before proving Theorem~\ref{T:weakharnack}, we list some consequences of Theorem~\ref{T:1} which are by now standard and thus we only give the statements without proofs. In the uniformly elliptic setting, Harnack inequality implies H\"older continuity of  weak solutions to \eqref{eq}. As observed in \cite{T71}, due to the explicit dependence of the constant $c$ in \eqref{est:harnack:strong} on $\Lambda(B_R)$ this is in general not true anymore in the non-uniformly elliptic setting. However, Theorem~\ref{T:1} yields the following large-scale H\"older continuity:

\begin{corollary}[H\"older continuity 'on large scales']\label{C:holder}
Consider the situation of Theorem~\ref{T:1}. For $R>0$ set $\bar \Lambda_R:=\sup_{R'\geq R}\Lambda(B_R)$. Suppose that $u$ is weak solution of \eqref{eq} in $B_{R_0}\subset\Omega$ and $\bar \Lambda_{R_1}<\infty$ for some $0<R_1<\frac14 R_0$. Then, for all $R\in[R_1,\frac12 R_0]$
\begin{equation}
\underset{B_{ R}}{\osc}\,u\leq c\left(\frac{R}{R_0}\right)^\theta \left(\fint_{B_{R_0}} |u| \right),
\end{equation}
where $c$ and $\theta$ are positive constants depending on $d,p,q$ and $\bar \Lambda_{R_1}$.
\end{corollary}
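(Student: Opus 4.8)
The statement to prove is Corollary~\ref{C:holder} (Hölder continuity on large scales). Here is my plan.

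\medskip

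\textbf{Plan.} The strategy is the classical De Giorgi--Nash--Moser derivation of Hölder continuity from the Harnack inequality, with careful bookkeeping of the constant's dependence on $\Lambda$. The key quantitative input is part (ii) of Theorem~\ref{T:1}, but since that is stated for nonnegative solutions, I would first record the standard oscillation-decay consequence: if $u$ is a weak solution in a ball $B_{2R}\subset B_{R_0}$ and $M_{2R}=\sup_{B_{2R}}u$, $m_{2R}=\inf_{B_{2R}}u$, then both $M_{2R}-u$ and $u-m_{2R}$ are nonnegative $a$-superharmonic (indeed $a$-harmonic) in $B_{2R}$, so applying \eqref{est:harnack:strong} on $B_{2R}$ to each yields
\begin{equation*}
 M_{2R}-m_R\leq c(M_{2R}-M_R),\qquad M_R-m_{2R}\leq c(m_R-m_{2R}),
\end{equation*}
with $c=c(d,p,q,\Lambda(B_{2R}))$; adding these and rearranging gives the contraction
\begin{equation*}
 \osc_{B_R}u\leq \frac{c-1}{c+1}\,\osc_{B_{2R}}u.
\end{equation*}
Here one must be sure to use $\Lambda(B_{2R})\leq\bar\Lambda_{R_1}$ for all radii $2R\geq R_1$, so that the contraction factor $\theta_0:=\frac{c-1}{c+1}<1$ is a fixed constant depending only on $d,p,q$ and $\bar\Lambda_{R_1}$, uniform across the relevant range of scales. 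This uniformity is precisely what $\bar\Lambda_R$ is designed to buy.

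\medskip

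\textbf{Iteration and interpolation.} Next I would iterate the contraction dyadically: for $R_1\le R\le \tfrac12 R_0$, pick the integer $k\ge 0$ with $2^kR\le \tfrac12 R_0<2^{k+1}R$, apply the contraction $k$ times from scale $\tfrac12 R_0$ down to scale $R$ to obtain
\begin{equation*}
 \osc_{B_R}u\leq \theta_0^{\,k}\,\osc_{B_{R_0/2}}u\lesssim \left(\frac{R}{R_0}\right)^{\theta}\osc_{B_{R_0/2}}u,
\end{equation*}
where $\theta:=\log(1/\theta_0)/\log 2>0$ depends only on $d,p,q,\bar\Lambda_{R_1}$ (the suppressed constant absorbs the single factor $\theta_0^{-1}$ lost by rounding $k$ down, and the mismatch $2^kR$ versus $R_0/2$). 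Finally, to replace $\osc_{B_{R_0/2}}u$ by the $L^1$ average over $B_{R_0}$ on the right-hand side, I would invoke the local boundedness bound \eqref{est:C:bound} with $\gamma=1$ applied on $B_{R_0}$, which controls $\|u\|_{L^\infty(B_{R_0/2})}$ — hence also $\osc_{B_{R_0/2}}u\le 2\|u\|_{L^\infty(B_{R_0/2})}$ — by $c\,\Lambda(B_{R_0})^{p'(1+1/\delta)}\fint_{B_{R_0}}|u|$, and note $\Lambda(B_{R_0})\leq\bar\Lambda_{R_1}$ since $R_0\ge R_1$. Folding this prefactor into $c$ completes the estimate.

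\medskip

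\textbf{Main obstacle.} The routine calculations (solving the one-dimensional recursion, comparing $2^k R$ with $R_0$) are not where the subtlety lies. The one genuine point requiring care is the \emph{uniformity of the contraction factor across scales}: at each dyadic step the Harnack constant depends on $\Lambda$ of the current ball, and these balls have radii ranging over $[R_1,\tfrac12 R_0]$, so one needs a single bound $\sup_{R_1\le R'\le R_0/2}\Lambda(B_{R'})<\infty$ — which is exactly the hypothesis $\bar\Lambda_{R_1}<\infty$ (using $\bar\Lambda_{R_1}=\sup_{R'\ge R_1}\Lambda(B_{R'})$, so it dominates all larger-radius values as well). Without this monotone envelope the iterated factor $\prod_j\theta_0(\Lambda(B_{2^jR}))$ need not stay bounded away from $1$ and the argument collapses; this is the structural reason why, as remarked before the corollary, genuine (small-scale) Hölder continuity fails in the non-uniformly elliptic regime and only a large-scale statement survives. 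A minor secondary point is checking that $B_{2R}\subset B_{R_0}$ throughout the iteration, which is why the range is restricted to $R\le \tfrac12 R_0$ and the initial oscillation is taken on $B_{R_0/2}$ rather than $B_{R_0}$.
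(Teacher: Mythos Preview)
The paper does not actually give a proof of Corollary~\ref{C:holder}; it explicitly states that this and the Liouville theorem are ``by now standard'' consequences of Theorem~\ref{T:1} and presents only the statements. Your argument is precisely the standard derivation the authors have in mind: the oscillation-decay lemma from Harnack, dyadic iteration with a uniform contraction factor guaranteed by $\bar\Lambda_{R_1}$, and the final passage from $\osc_{B_{R_0/2}}u$ to $\fint_{B_{R_0}}|u|$ via \eqref{est:C:bound}. The identification of the uniformity of the Harnack constant across scales as the essential structural point (and the reason only a large-scale statement holds) is exactly right. The proof is correct and nothing is missing.
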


%\begin{proof}[Proof of Corollary~\ref{C:holder}]
% Appealing to the weak Harnack inequality Theorem~\ref{T:weakharnack} the proof is completely classical. For convenience of the reader we sketch the argument.
% 
%There exists $\gamma<1$ such that for all $r\in [R_1,\frac12 R_0]$,
%%
%\begin{equation}
%\omega(\tfrac{r}2):=\underset{B_{ \frac12 r}}{\osc}\,u\leq \gamma \omega(r).
%\end{equation} 
%%
%Choose $k\in \mathbb N$ such that $2^{-k-1}R_0<R\leq 2^{-k}R_0$,
%%
%\begin{equation*}
% \omega(R)\leq \omega(2^{-k}R_0)\leq \gamma^{k-1}\omega(2^{-1}R_0)\leq \gamma^{-1}\left(\frac{2R}{R_0}\right)^\frac{\log \gamma}{\log \frac12}\omega(2^{-1}R_0),
%\end{equation*} 
%%
%and the claim follows by $\omega(r)\leq 2\|u\|_{L^\infty(B_r)}$ and Corollary~\ref{C:bound}.
%\end{proof}

\begin{remark}
 If $\Lambda(B)$ is bounded uniformly for all balls $B\subset \Omega$, Corollary~\ref{C:holder} implies 'usual' H\"older continuity of $a$-harmonic functions in $\Omega$. This improves \cite[Theorem~5.1]{T71} since it relaxes the integrability assumptions on $\lambda^{-1}$ and $\mu$. However, under such local assumptions H\"older regularity of $a$-harmonic functions is proven under much weaker integrability conditions, see for example \cite{FKS82}.
\end{remark}
A direct consequence of Corollary~\ref{C:holder} is the following zero-order Liouville property:
\begin{corollary}[Liouville Theorem]\label{C:liouville}
Fix $d\geq2$ and $p,q\in(1,\infty]$ satisfying \eqref{eq:pq}. Let $a:\R^d\to\R^{d\times d}$ be a measurable coefficient field such that $\lambda,\mu$ given by \eqref{def:lmu} are non-negative and satisfy $\frac1\lambda\in L^q_{\rm loc}(\R^d)$, $\mu\in L^p_{\rm loc}(\R^d)$. Moreover, suppose that $\limsup_{R\to\infty}\Lambda(B_R)<\infty$. Then, every bounded local weak solution $u$ of \eqref{eq} in $\R^d$, in the sense of Definition~\ref{def:solution}, is constant.
\end{corollary}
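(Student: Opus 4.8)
\textbf{Plan for the proof of Corollary~\ref{C:liouville}.} The statement is a standard consequence of the large-scale H\"older continuity in Corollary~\ref{C:holder}, so I would proceed as follows. First, observe that the hypotheses ensure that we are in a position to apply Corollary~\ref{C:holder} on arbitrarily large balls: since $\limsup_{R\to\infty}\Lambda(B_R)<\infty$, there is $R_*>0$ and $M<\infty$ with $\Lambda(B_R)\leq M$ for all $R\geq R_*$; by definition $\bar\Lambda_{R_1}=\sup_{R'\geq R_1}\Lambda(B_{R'})$, and since $\frac1\lambda\in L^q_{\rm loc}$, $\mu\in L^p_{\rm loc}$ the quantity $\Lambda(B_{R'})$ is finite for each individual $R'$, so $\bar\Lambda_{R_1}<\infty$ for every $R_1\geq R_*$ (the sup over $R'\in[R_1,R_*]$ of the finitely-valued $R'\mapsto\Lambda(B_{R'})$ contributes only finitely, though strictly one should note $\Lambda$ need not be continuous; it is cleaner to just take $R_1\geq R_*$ so that $\bar\Lambda_{R_1}\leq M$ directly from the $\limsup$ bound). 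Let $c=c(d,p,q,M)$ and $\theta=\theta(d,p,q,M)>0$ be the constants furnished by Corollary~\ref{C:holder} for this value of $\bar\Lambda_{R_1}$.

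Next, fix such an $R_1\geq R_*$ and let $u$ be a bounded local weak solution of \eqref{eq} in $\R^d$, with $\|u\|_{L^\infty(\R^d)}=:K<\infty$. For any $R_0>4R_1$, $u$ is in particular a weak solution in $B_{R_0}$, so Corollary~\ref{C:holder} applies with this $R_0$: for every $R\in[R_1,\tfrac12 R_0]$,
\begin{equation*}
 \osc_{B_R}u\leq c\left(\frac{R}{R_0}\right)^\theta\left(\fint_{B_{R_0}}|u|\right)\leq c\left(\frac{R}{R_0}\right)^\theta K.
\end{equation*}
Now fix any $R\geq R_1$ and let $R_0\to\infty$ along, say, $R_0=2^n R$ with $n\to\infty$; the right-hand side tends to $0$ while the left-hand side is independent of $R_0$, hence $\osc_{B_R}u=0$, i.e.\ $u$ is constant on $B_R$. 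Since this holds for every $R\geq R_1$ and $\bigcup_{R\geq R_1}B_R=\R^d$, and a function constant on each of an increasing family of balls is globally constant, we conclude $u$ is constant on $\R^d$.

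\textbf{Main obstacle.} There is no real analytic difficulty here; the substance is entirely contained in Corollary~\ref{C:holder} (and hence in Theorem~\ref{T:1}). The only point requiring a little care is the bookkeeping around $\bar\Lambda_{R_1}$: one must make sure that the $\limsup$ hypothesis genuinely gives a uniform bound on $\bar\Lambda_{R_1}$ for $R_1$ large, and that the constants $c,\theta$ in Corollary~\ref{C:holder} can then be frozen (depending only on $d,p,q$ and this uniform bound) before sending $R_0\to\infty$ — so that the decay factor $(R/R_0)^\theta$ has a fixed exponent $\theta$ independent of $R_0$. Once that is arranged, letting $R_0\to\infty$ immediately kills the oscillation on every fixed ball and the conclusion follows.
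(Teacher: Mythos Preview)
Your proposal is correct and matches the paper's approach exactly: the paper states Corollary~\ref{C:liouville} as ``a direct consequence of Corollary~\ref{C:holder}'' without further proof, and what you have written is precisely the standard derivation of the Liouville property from the large-scale oscillation decay, with the correct observation that the $\limsup$ hypothesis lets you freeze $c$ and $\theta$ before sending $R_0\to\infty$.
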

In \cite[Theorem~3]{EP73}, the conclusion of Corollary~\ref{C:liouville} is proven (relying on the results of Trudinger in \cite{T71,T73}) under the more restrictive assumption $\frac1p+\frac1q<\frac2d$ (notice that \cite[Theorem~3]{EP73} applies also in situations with additional lower order terms which are not considered here).

\smallskip

Finally, we provide the proof of Theorem~\ref{T:weakharnack}.

\begin{proof}[Proof of Theorem~\ref{T:weakharnack}]

Throughout the proof we write $\lesssim$ if $\leq$ holds up to a positive constant which depends only on $d,p$ and $q$. Without loss of generality we set $R=1$ and suppose that $u\geq \e>0$. In what follows, we suppose $1+\frac1q>\frac2d$. The remaining case $d=2$ with $q=\infty$ can be done with no additional difficulties by appealing to corresponding versions of Sobolev inequality.

\step 1 Fix $0<\theta<\tau<1$. We claim that
\begin{equation}\label{harnack:claim:s1}
 \exp\left(\fint_{B_\tau}\log (u)\right)\leq C\inf_{B_{\theta}} u,
\end{equation}
%.\footnote{$c_2=\frac{p'}{ q_*}(1+\frac1\delta)+\frac12$, $c_1=c(\tau-\theta)^{-\frac{d}{q_*}s}(1-\tau)^{-1}$ and $c=c(d,p,q)$}
where $C=c_1 e^{\Lambda(B_1)^{c_2}}$ with $c_1=c_1(d,p,q,\tau,\theta)\in[1,\infty)$ and $c_2=c_2(d,p,q)\in[1,\infty)$. Testing \eqref{def:harmonic} with $\phi=\eta u^{-1}$ with $\eta\geq0$ and $\eta\in H_0^1(B_1,a)$, we obtain
\begin{equation}\label{eq:harnack:1}
\int_{B_1} \frac1u a\nabla u\cdot \nabla \eta\,dx-\int_{B_1} \frac\eta{u^{2}} a\nabla u \cdot \nabla u \,dx\geq0.
\end{equation} 
Setting $v:=\log(\frac{k}u)$, $k>0$, we see $a\nabla v\cdot\nabla \eta=-\frac1u a\nabla u\cdot \nabla \eta$, hence \eqref{eq:harnack:1} and $u>0$ imply 
\begin{equation*}%\label{eq:harnack:1v}
\int_{B_1} a\nabla v\cdot \nabla \eta \,dx\leq0.
\end{equation*} 
The arbitrariness of $\eta$ implies that $v$ is $a$-subharmonic in the sense of \eqref{def:harmonic}. Hence Theorem~\ref{T:bound} with $\gamma=q_*$, where $\frac{1}{q_*}=\frac12+\frac1{2q}-\frac1d$ (recall the assumption $1+\frac1q>\frac2d$), yield
\begin{equation}\label{s1:harnack:1}
 \sup_{B_\theta}v\lesssim \Lambda(B_\tau)^{\frac{p'}{q_*}(1+\frac1\delta)}(1-\tfrac\theta\tau)^{-\frac{d}{q_*}s}\left(\fint_{B_\tau} |v|^{q_*}\right)^\frac1{q_*}\leq \Lambda(B_1)^{\frac{p'}{q_*}(1+\frac1\delta)}(\tau-\theta)^{-\frac{d}{q_*}s}\left(\int_{B_\tau} |v|^{q_*}\right)^\frac1{q_*},
\end{equation}
with $s=1+p'(1+\frac{1}{\delta})(\frac1p+\frac1q)$. Next, we replace $\eta$ in \eqref{eq:harnack:1} by $\eta^2$ with $\eta\in C_0^1(B_1)$ and obtain (using \eqref{def:lmu} and applying Young's inequality)
\begin{equation*}%\label{s1:harnack:2}
\int_{B_1}\eta^2 \lambda|\nabla v|^2\leq 4\int_{B_1}\mu|\nabla \eta|^2. 
\end{equation*}
Choosing $\eta$ such that $\eta=1$ in $B_\tau$ and $|\nabla \eta|\leq 2(1-\tau)^{-1}$, we obtain
\begin{equation}\label{s1:harnack:2}
\int_{B_\tau}\lambda |\nabla v|^2\leq 4^2(1-\tau)^{-2}\int_{B_1}\mu. 
\end{equation}
Finally, we choose $k>0$ such that $\fint_{B_\tau}v=0$, i.e.\ $k:=\exp\left(\fint_{B_\tau} \log(u)\right)$, and thus by a combination of H\"older and Sobolev inequality (note that $q_*$ is the Sobolev exponent for $\frac{2q}{q+1}$)
\begin{equation}\label{s1:harnack:3}
\left(\int_{B_\tau} |v|^{q_*}\right)^\frac1{q_*}\lesssim \left(\int_{B_\tau} |\nabla v|^{\frac{2q}{q+1}}\right)^\frac{q+1}{2q}\leq \|\lambda^{-1}\|_{L^q(B_\tau)}^\frac12 \left(\int_{B_\tau} \lambda |\nabla v|^2\right)^\frac{1}{2}.
\end{equation}
Combining \eqref{s1:harnack:1}--\eqref{s1:harnack:3}, we obtain
\begin{align*}
 \sup_{B_\theta}v\stackrel{\eqref{s1:harnack:1}}{\lesssim}& \frac{\Lambda(B_1)^{\frac{p'}{q_*}(1+\frac1\delta)}}{(\tau-\theta)^{\frac{d}{q_*}s}}\left(\int_{B_\tau} |v|^{q_*}\right)^\frac1{q_*} \stackrel{\eqref{s1:harnack:3}}{\lesssim} \frac{\Lambda(B_1)^{\frac{p'}{q_*}(1+\frac1\delta)}}{(\tau-\theta)^{\frac{d}{q_*}s}}\|\lambda^{-1}\|_{L^q(B_\tau)}^\frac12 \left(\int_{B_\tau} \lambda |\nabla v|^2\right)^\frac{1}{2}\\
 \stackrel{\eqref{s1:harnack:2}}{\lesssim}&\frac{\Lambda(B_1)^{\frac{p'}{q_*}(1+\frac1\delta)+\frac12}}{(\tau-\theta)^{\frac{d}{q_*}s}(1-\tau)}.
\end{align*}
Finally, the definitions of $v$ and $k$ yield the claimed estimate \eqref{harnack:claim:s1}.

\step 2 Fix $0<\theta<\tau<1$. We claim that there exist $s_0=s_0(d,p,q,\theta,\tau,\Lambda(B_1))>0$ and $C=C(d,p,q,\theta,\tau,\Lambda(B_1))<\infty$ such that 
\begin{equation}\label{claim:harnack:s2}
 \left(\int_{B_{\theta }} |u|^{s_0}\right)^\frac1{s_0}\leq C\exp\left(\fint_{B_{\tau }}\log (u)\right).
\end{equation}
In fact, it holds $s_0^{-1},C\leq c_1e^{\Lambda(B_1)^{c_2}}$ with $c_1=c_1(d,p,q,\tau,\theta)\in[1,\infty)$ and $c_2=c_2(d,p,q)$. Set $w:=v_-=(\log(\frac{u}k))_+$.  For given  $\beta\geq1$ and $\eta\geq0,\, \in C_0^1(B_1)$, we consider the test function
\begin{equation}
 \phi(x)=\eta^2(x)u^{-1}(x)(w^{\beta}(x)+(2\beta)^\beta).
\end{equation}
The fact that $u$ is $a$-superharmonic, see~\eqref{def:harmonic}, $w\geq0$, and the elementary inequality (coming from Young's inequality)
\begin{equation}\label{ineq:elementary}
 \beta w^{\beta-1}\leq \frac12 (w^\beta + (2\beta)^\beta)
\end{equation}
yield (using $\nabla \phi =\frac{2\eta}{u} (w^\beta + (2\beta)^\beta)\nabla\eta+\frac{\eta^2}{u^2}(\beta w^{\beta-1}-w^\beta-(2\beta)^\beta)\nabla u$)
\begin{equation}\label{est:harnack:s2a}
  \frac12\int \frac{\eta^2}{u^{2}} (\beta w^{\beta-1} + \frac12(2\beta)^\beta) a\nabla u \cdot \nabla u\leq\frac12 \int \frac{\eta^2}{u^{2}} (w^\beta + (2\beta)^\beta) a\nabla u \cdot \nabla u\leq 2\int \frac{\eta}{u} (w^\beta + (2\beta)^\beta) a \nabla u \cdot \nabla \eta.
\end{equation}
Appealing to \eqref{def:lmu} and Young's inequality, we estimate the right-hand side of \eqref{est:harnack:s2a}
\begin{align}\label{est:harnack:s2a2}
2\int \frac\eta{u} (w^\beta + (2\beta)^\beta) a \nabla u \cdot \nabla \eta \leq \frac14\int \frac{\eta^2}{u^2} (\beta w^{\beta-1}+(2\beta)^\beta) a \nabla u\cdot \nabla u+4\int (\frac1\beta w^{\beta+1}+(2\beta)^\beta) \mu |\nabla\eta|^2.
\end{align}
Note that the first term on the right-hand side in \eqref{est:harnack:s2a2} can be absorbed into the left-hand side of \eqref{est:harnack:s2a} and we obtain, using $\beta\geq1$ and the definition of $w$,
\begin{equation}\label{est:harnack:s2b}
  \beta \int \eta^2 w^{\beta-1}  a\nabla w \cdot \nabla w\leq 16\int  (w^{\beta+1} + (2\beta)^\beta) \mu |\nabla \eta|^2.
\end{equation}
Fix $0<\theta\leq \rho<\sigma\leq \tau<1$. Let $\eta\in C_0^1(B_\sigma)$ be such that $\eta=1$ in $B_\rho$. Minimizing the right-hand side of \eqref{est:harnack:s2b} among such cutoff functions, we obtain with the help of Lemma~\ref{L:optimcutoff}, \eqref{def:lmu} and $\beta\geq1$
\begin{equation}
  \frac{2}{\beta+1} \int_{B_\rho}  \lambda|\nabla w^{\frac{\beta+1}2}|^2\leq c(\sigma-\rho)^{-\frac{2d}{d-1}}\|\mu\|_{L^p(B_\sigma)}\left(\|w^\frac{\beta+1}2\|_{W^{1,p_*}(B_\sigma)}^2+(2\beta)^\beta\right),
\end{equation}
where $c=c(d,p,\theta)\in[1,\infty)$. Hence, setting $\alpha:=\frac{\beta+1}2$, we obtain by H\"older inequality and \eqref{def:lambda} 
\begin{equation}\label{est:moser:harnack:s2:1}
  \|\nabla (w^\alpha)\|_{L^\frac{2q}{q+1}(B_\rho)} \leq \frac{c(\alpha \Lambda(B_1))^\frac12}{(\sigma-\rho)^{\frac{d}{d-1}}}\left(\|w^\alpha\|_{W^{1,p_*}(B_\sigma)}+(4\alpha)^\alpha\right).
\end{equation}
Using \eqref{est:chi1}--\eqref{est:chi3} (with $u$ replaced by $w$), we derive from \eqref{est:moser:harnack:s2:1} an analogue of \eqref{est:moser:basis}
\begin{align}\label{est:moser:basisharnack}
  \|w^{\alpha\chi}\|_{W^{{1,p_*}}(B_\rho)}^\frac1{\alpha\chi} \leq \left(\frac{c \Lambda(B_1) \alpha\chi}{(\sigma-\rho)^{\frac{2d}{d-1}}} \right)^\frac1{2\alpha\chi}\left(\|w^\alpha\|_{W^{1,p_*}(B_\sigma)}^\frac1\alpha+4\alpha\right),
\end{align}
where $\chi=1+\delta$, with $\delta=\min\{\frac1{d-1}-\frac1{2p},\frac12\}-\frac1{2q}>0$, and $c=c(d,p,q,\theta,\tau)\in[1,\infty)$. For $\nu\in\mathbb N$ set, $\alpha_\nu=\chi^{\nu-1}$, $\rho_\nu=\theta+2^{-\nu}(\tau-\theta)$ and $\sigma_\nu:=\rho_\nu+2^{-\nu}(\tau-\theta)=\rho_{\nu-1}$. Then
\begin{align}\label{harnack:est:iterate}
&\| w^{\chi^\nu}\|_{W^{1,p_*}(B_{\rho_{\nu}})}^{\frac{1}{\chi^\nu }}\leq \left(\frac{c \Lambda(B_1) (4^{\frac{d}{d-1}}\chi)^{\nu}}{(\tau-\theta)^{\frac{2d}{d-1}}}\right)^\frac1{2\chi^\nu}\left(  \|w^{\chi^{\nu-1}}\|_{W^{1,p_*}(B_{\rho_{\nu-1}})}^{\frac{1}{\chi^{\nu-1} }}+4\chi^{\nu-1}\right).
\end{align}
Estimate \eqref{harnack:est:iterate} can be iterated and we find that there exists $c_1=c_1(d,p,q,\theta,\tau)\in[1,\infty)$ and $c_2=c_2(d,p,q)\in[1,\infty)$ such that for every $s\geq1$,
\begin{equation}\label{est:harnack:s2:ws}
 \|w\|_{L^s(B_\theta)}\leq c_1 \Lambda(B_1)^{c_2} \left(\|w\|_{W^{1,p_*}(B_{\tau})} + s\right).
\end{equation}
Recalling the fact $w=v_-$, estimates \eqref{s1:harnack:2} and \eqref{s1:harnack:3} and the fact $p_*\leq \frac{2q}{q+1}$ (by \eqref{eq:pq}), we obtain for $s\geq1$
\begin{equation}\label{est:harnack:s2:ws2}
 \|w\|_{L^s(B_\theta)}\leq c_1' \Lambda(B_1)^{c_2'} s,
\end{equation}
where $c_1'$ and $c_2'$ have the same dependencies as $c_1$ and $c_2$ in \eqref{est:harnack:s2:ws}, respectively. Estimate \eqref{est:harnack:s2:ws2} and the choice $s_0:=(2c_1'\Lambda(B_1)^{c_2'}e)^{-1}$ yield for every $j\in\mathbb N$% there exists $s_0=s_0(d,p,q,\theta,\tau,\Lambda)\in(0,e^{-1})$ such that
\begin{equation*}
 \frac{s_0^j\|w\|_{L^j(B_\theta)}^j}{j!}\leq \frac1{2^j}\frac{j^j}{e^jj!}\leq \frac1{2^j},
\end{equation*}
and thus
\begin{equation*}
 \int_{B_\theta}\exp(s_0 w)\leq \sum_{j=0}^\infty \frac{s_0 \|w\|_{L^j(B_\theta)}^j}{j!}\leq \sum_{j=0}^\infty \frac1{2^j}=2.
\end{equation*}
Recall that $w=(\log \frac{u}{k})_+$, with $k=\exp\left(\fint_{B_\tau}\log(u)\right)$, and thus
\begin{equation*}
 \left(\int_{B_\theta} (\frac{u}k)^{s_0}\right)^\frac1{s_0}\leq (2+|B_1|)^\frac1{s_0}\qquad \Rightarrow\qquad \left(\int_{B_\theta} u^{s_0}\right)^\frac1{s_0}\leq (2+|B_1|)^{2ec_1'\Lambda(B_1)^{c_2'}}\exp\left(\fint_{B_\tau} \log(u)\right),
\end{equation*}
which proves the claim.

\step 3 Fix $\e\in(0,\frac{q_*}2)$ and $0<\tau<\tau'<1$. We claim that for every $\gamma\in(\e,\frac{q_*}2)$ there exists $C=C(\gamma,d,\e,p,q,\Lambda(B_1),\tau,\tau')\in[1,\infty)$ such that
\begin{equation}\label{claim:s3:harnack}
\left(\int_{B_\tau} u^\gamma\right)^\frac1\gamma\leq C\left(\int_{B_{\tau'}} u^\e\right)^\frac1\e,
\end{equation}
where $C\leq c_1\Lambda^{c_2\frac{\gamma}\e}$ with $c_1=c_1(\gamma,d,p,q,\tau,\tau')\in[1,\infty)$ and $c_2=c_2(\gamma,d,p,q)\in[1,\infty)$. Recall $u>0$. Testing \eqref{def:harmonic} with $\phi:=\eta^2 u^\beta$ where $\beta\in(-1,0)$ and $\eta\geq0$, $\eta\in C_0^1(B_1)$, we obtain
\begin{equation}
 \int \eta^2 u^{\beta-1} a\nabla u\cdot \nabla u\leq \frac{2}{|\beta|} \int \eta u^\beta a\nabla u\cdot \nabla \eta.
\end{equation}
Young's inequality and \eqref{def:lmu} (in the form $|a\nabla u|\leq (\mu a\nabla u\cdot\nabla u)^\frac12$ and $a\nabla u\cdot\nabla u\geq \lambda |\nabla u|^2$) yield
\begin{equation}\label{est:moser:1:harnack}
 \int \eta^2 \lambda |\nabla (u^\frac{\beta+1}2)|^2\leq \frac{(\beta+1)^2}{|\beta|^2} \int  u^{\beta+1}  \mu|\nabla \eta|^2.
\end{equation}
Set $\alpha=\frac{\beta+1}2\in(0,\frac12)$. Fix $\tau\leq\rho<\sigma\leq\tau'$ and consider $\eta\in C_0^1(B_\sigma)$ satisfying $\eta=1$ in $B_\rho$. We estimate the right-hand side of \eqref{est:moser:1:harnack} by either applying Lemma~\ref{L:optimcutoff} or choosing a linear cutoff function $\eta$. In combination with H\"older inequality, we get $c=c(p,d,\tau)\in[1,\infty)$ such that
\begin{align}
  \|\nabla (u^\alpha)\|_{L^{\frac{2q}{q+1}}(B_\rho)} \leq& c\Lambda^\frac12 (\sigma-\rho)^{-\frac{d}{d-1}}\frac{\alpha}{1-2\alpha} \|u^\alpha\|_{W^{1,p_*}(B_\sigma)}\label{est:apriori:harnack:s3a}\\
   \|\nabla (u^\alpha)\|_{L^{\frac{2q}{q+1}}(B_\rho)} \leq& c\Lambda^\frac12 (\sigma-\rho)^{-1}\frac{\alpha}{1-2\alpha} \|u^\alpha\|_{L^{2p'}(B_\sigma)}.\label{est:apriori:harnack:s3b}
\end{align}
Using \eqref{est:chi1}--\eqref{est:chi3}, we derive from \eqref{est:apriori:harnack:s3a}
\begin{align}\label{est:moser:basis:harnack}
  \|u^{\alpha\chi}\|_{W^{1,p_*}(B_\rho)}^\frac1{\alpha \chi} \leq \left(\frac{c \Lambda^\frac12\chi\alpha}{(\sigma-\rho)^{\frac{d}{d-1}}(1-2\alpha)} \right)^\frac1{\alpha\chi}\|u^\alpha\|_{W^{1,p_*}(B_\sigma)}^\frac1\alpha,
\end{align}
where $\chi=1+\delta$, with $\delta=\min\{\frac{1}{d-1}-\frac1{2p},\frac12\}-\frac1{2q}>0$. Fix $\kappa\in(0,\frac12)$. We set $\alpha_0:=\frac{\kappa}{\chi}\in(0,\frac12)$ and $\alpha_i=\frac{\kappa}{\chi^{i+1}}=\frac{\alpha_{i-1}}{\chi}$ for $i\in \mathbb N$. Fix $n\in\mathbb N$ such that $\alpha_n<\e\frac{p-1}{2p}\leq \alpha_{n-1}$, $\tau_1:=\tau+\frac14(\tau'-\tau)$, and $\tau_2:=\tau+\frac34(\tau'-\tau)$. Iterating \eqref{est:moser:basis:harnack} $n$-times, we find $C=C(\kappa,d,\e,p,q,\tau,\tau',\Lambda(B_1))\in[1,\infty)$ such that
\begin{equation}\label{est:s3:harnack:inter}
\|u^{\kappa}\|_{W^{1,p_*}(B_{\tau_1})}^\frac1{\kappa}\leq C \|u^{\alpha_n}\|_{W^{1,p_*}(B_{\tau_2})}^\frac1{\alpha_n}.
\end{equation} 
Observe that the assumption $\e\frac{p-1}{2p}\leq \alpha_{n-1}$ yields $\chi^n\leq \frac{\kappa}\e \frac{2p}{p-1}$ and thus $C\leq c_1\Lambda^{\frac{\kappa}\e c_2}$ with $c_1=c_1(\kappa,d,\e,p,q,\tau,\tau')\in[1,\infty)$ and $c_2=c_2(d,p,q)\in[1,\infty)$. Using $p_*<\frac{2q}{q+1}$, \eqref{est:apriori:harnack:s3b} and the choice of $\alpha_n$ (i.e.\ $\alpha_n<\e\frac{p-1}{2p}$), we estimate the right-hand side of \eqref{est:s3:harnack:inter}
\begin{align}\label{est:s3:harnack:right}
 \|u^{\alpha_n}\|_{W^{1,p_*}(B_{\tau_2})}^\frac1{\alpha_n}\leq C\|u\|_{L^{\frac{2p}{p-1}\alpha_n}(B_{\tau'})}\leq C\|u\|_{L^{\e}(B_{\tau'})}.
\end{align}
%Suppose that $1+\frac1q>\frac2d$. 
Using Sobolev inequality and \eqref{est:apriori:harnack:s3a}, we get for the left-hand side of \eqref{est:s3:harnack:inter} 
\begin{equation}\label{est:s3:harnack:left}
\|u^\kappa\|_{L^{ q_*}(B_\tau)}\lesssim \|\nabla (u^\kappa)\|_{L^\frac{2q}{q+1}(B_\tau)}+|B_\tau|^{\frac1{q*}-\frac1{p_*}}\|u^\kappa\|_{L^{p_*}(B_\tau)}\leq  c \|u^\kappa\|_{W^{1,p_*}(B_{\tau_1})},
\end{equation}
where $\frac1{q*}=\frac12+\frac1{2q}-\frac1d$. Then a combination of \eqref{est:s3:harnack:inter}--\eqref{est:s3:harnack:left} yields the desired claim \eqref{claim:s3:harnack} for $\gamma=\kappa q_*\in(0,\frac12 q_*)$. %The remaining case $q=\infty$, $d=2$, follows the same way using that \eqref{est:s3:harnack:left} is valid for every $q_*<\infty$ with $c<\infty$ also depending on $q_*$.

\step 4 Conclusion. Fix $0<\theta<\tau<1$. Combining Step~1, 2 and 3 (with $\e=s_0$), we obtain
\begin{align*}
 \left(\int_{B_\tau} u^\gamma\right)^\frac1\gamma\stackrel{\eqref{claim:s3:harnack}}{\leq} C_1\left(\int_{B_{\frac{\tau+1}2}} u^{s_0}\right)^\frac1{s_0}\stackrel{\eqref{claim:harnack:s2}}{\leq} C_1C_2\exp\left(\fint_{B_{\frac{\tau+3}4}}\log(u)\right)\stackrel{\eqref{harnack:claim:s1}}{\leq} C_1C_2C_3\inf_{B_\theta} u,
\end{align*}
where $C_1,C_2,C_3\in[1,\infty)$ satisfy the desired dependencies. 
\end{proof}

\section{Sublinear corrector in random homogenization with degenerate coefficients}\label{sec:prob}

In this section we apply Theorem~\ref{T:1} in the context of stochastic homogenization. Stochastic homogenization for uniformly elliptic equations dates back to the classical papers \cite{PV79,K79}. Currently, stochastic homogenization beyond uniform ellipticity is an active field of research, see e.g.\ \cite{ADS15,AN17,AD18,AS14,BFO18,CD16,DNS,FK97,FHS17,NSS17}.

A central object in the homogenization of linear elliptic equations is the so called corrector: For $\xi\in \R^d$, the corrector $\phi_\xi$ is characterized almost surely by solving
\begin{equation}\label{eq:cor0}
\nabla\cdot a^\omega(\nabla \phi_\xi+\xi)=0\qquad \mbox{in $\R^d$}\quad\mbox{and}\qquad \lim_{R\to\infty}\frac1R\fint_{B_R}|\phi_\xi|=0.
\end{equation}
Here, we assume that the coefficient fields $\{a^\omega(x)\}_{x\in\R^d}\subset \R^{d\times d}$ are \textit{statistically homogeneous and ergodic}, and \textit{non-uniformly elliptic} (see below for the precise assumptions). In \cite{ADS15,CD16,DNS,FK97}, the corrector $\phi$ is used prominently to prove quenched invariance principles for the random walk \cite{ADS15,DNS} or diffusion \cite{CD16,FK97} in a random environment with degenerate and/or unbounded coefficients.  The key ingredient in \cite{ADS15,CD16,DNS,FK97} is to upgrade the $L^1$-sublinearity into $L^\infty$-sublinearity, i.e.\ to show $\frac1R\|\phi_\xi\|_{L^\infty(B_R)}\to0$ as $R\to\infty$. In this section, we show that the results of Section~\ref{sec:bound} can be used to weaken the assumption of \cite{CD16,FK97} in order to establish $L^\infty$-sublinearity of the corrector. In order to reduce input from probability theory in the present paper, we postpone the application to the quenched invariance principle for the random walk to a forthcoming work.

\smallskip

Let us now be more precise and phrase the assumptions on the coefficient fields by appealing to the language of ergodic, measure preserving dynamical systems (which is a standard in the theory of stochastic homogenization; see, e.g., the seminal paper \cite{PV79}): Let $(\Omega,\mathcal F,\mathbb P)$ denote a probability space and $\tau=(\tau_x)_{x\in\R^d}$ a family of measurable mappings $\tau_x:\Omega\to\Omega$ satisfying
\begin{itemize}
  \item (group property) $\tau_0\omega=\omega$ for all $\omega\in\Omega$ and $\tau_{x+y}=\tau_x\tau_y$ for all $x,y\in\R^d$.
  \item (stationarity) For every $x\in\R^d$ and $B\in\mathcal F$ it holds $\mathbb P(\tau_x B)=\mathbb{P}(B)$.
  \item (ergodicity) All $B\in\mathcal F$ with $\tau_x B=B$ for all $x\in\R^d$ satisfy $\mathbb P(B)\in\{0,1\}$.
\end{itemize}
For a random field $a:\Omega\to\R^{d\times d}$ and $\omega\in\Omega$, we denote by $a^\omega:\R^d\to\R^{d\times d}$ its stationary extension given by $a^\omega(x):=a(\tau_x\omega)$.
\begin{assumption}\label{ass:prop}
There exists exponents $p,q\in[1,\infty]$ satisfying $\frac1p+\frac1q<\frac2{d-1}$ if $d\geq3$ such that the following is true: The random variables $\lambda,\mu$ given by
  \begin{equation}\label{def:lmurandom}
 \lambda(\omega):=\inf_{\xi\in\R^d} \frac{\xi\cdot a(\omega)\xi}{|\xi|^2},\qquad \mu(\omega):=\sup_{\xi\in\R^d}\frac{|a(\omega)\xi|^2}{\xi\cdot a(\omega)\xi}
 \end{equation}
 are non-negative and satisfy the moment condition
 \begin{equation}\label{ass:moment}
 \mathbb E[\lambda^{-q}]<\infty,\qquad \mathbb E[\mu^p]<\infty,
 \end{equation}
 where $\mathbb E$ denotes the expected value.
\end{assumption}
Assumption~\ref{ass:prop} ensures the existence of a well-defined corrector which is the subject of the following
\begin{lemma}\label{L:corrector:prelim}
Suppose that Assumption~\ref{ass:prop} is satisfied. Then there exists $\Omega_1\subset \Omega$ with $\mathbb P(\Omega_1)=1$ such that the following is true: For every $\omega\in\Omega_1$ and all $i=1,\dots,d$ there exists a weak solution $\phi_i\in H_{\rm loc}^1(\R^d,a^\omega)$ of 
\begin{equation}\label{eq:corrector}
\nabla\cdot a^\omega (e_i+\nabla \phi_i)=0\qquad\mbox{in $\R^d$}
\end{equation}
with $a^\omega(x)=a(\tau_x \omega)$, which is sublinear in $L^1$ in the sense
\begin{equation}\label{sublinearl1}
 \limsup_{R\to\infty}R^{-1}\left(\fint_{B_R}|\phi_i|\right)=0
\end{equation}
and for every $z\in\mathbb Q^d$ it holds
\begin{equation}\label{ene:ergodic}
 \limsup_{R\to\infty}\left(\fint_{B_R(Rz)}a(\nabla \phi_i+e_i)\cdot (\nabla \phi_i+e_i)\right)\leq\mathbb E[\mu].
\end{equation}
\end{lemma}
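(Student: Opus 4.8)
\textbf{Proof proposal for Lemma~\ref{L:corrector:prelim}.}

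The plan is to follow the classical construction of the corrector via the Lax--Milgram lemma applied on the probability space, combined with the ergodic theorem to pass from averages of stationary extensions to expectations, and then invoke the deterministic machinery from Section~\ref{sec:bound} (in particular the maximum principle / Caccioppoli estimates implicit there, or simply a Caccioppoli inequality) to control the energy on balls. First I would introduce the space of ``potential'' random fields: let $L^2_{\mathrm{pot}}$ be the closure in $L^2(\Omega,\mu\,d\mathbb P)$ of gradients of stationary $C^1$ functions (gradient fields $\nabla g(\tau_x\omega)$), which makes sense because $\mathbb E[\mu]<\infty$ by Jensen and \eqref{ass:moment}. On this Hilbert space the bilinear form $(F,G)\mapsto \mathbb E[F\cdot aG]$ is bounded (by $\mathbb E[\mu\,|F|\,|G|]^{?}$, more precisely using $|aG|\le(\mu\, aG\cdot G)^{1/2}$ and Cauchy--Schwarz) and coercive (since $aG\cdot G\ge\lambda|G|^2$ and $\mathbb E[\lambda^{-q}]<\infty$ gives $\mathbb E[|G|]\le\mathbb E[\lambda^{-1}]^{1/2}\mathbb E[aG\cdot G]^{1/2}$, hence control of $|G|$ in a suitable sense). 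Applying Lax--Milgram (or rather a projection argument, since $\mu$ may degenerate one works in the weighted space and checks the variational problem $\mathbb E[(\chi_i+e_i)\cdot a F]=0$ for all $F\in L^2_{\mathrm{pot}}$ has a solution $\chi_i\in L^2_{\mathrm{pot}}$) produces the ``corrector gradient'' $\chi_i$, and one sets, for fixed $\omega$ in a full-measure set, $\nabla\phi_i(x):=\chi_i(\tau_x\omega)$ and recovers $\phi_i$ up to an additive constant by integrating (the field $\chi_i(\tau_\cdot\omega)$ is curl-free as an $L^1_{\mathrm{loc}}$-limit of gradients, so it has a potential).

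Next I would establish the three claimed properties on a common full-measure set $\Omega_1$. Equation~\eqref{eq:corrector} is immediate: testing $\mathbb E[(\chi_i+e_i)\cdot aF]=0$ against $F=\nabla g$ for $g\in C_0^1$ stationary and unfolding gives, for $\mathbb P$-a.e.\ $\omega$, $\int_{\R^d} a^\omega(e_i+\nabla\phi_i)\cdot\nabla\psi=0$ for all $\psi\in C_0^1(\R^d)$, which is \eqref{def:harmonic}; the regularity $\phi_i\in H^1_{\mathrm{loc}}(\R^d,a^\omega)$ follows from $\mathbb E[\mu|\chi_i|^2]<\infty$ plus $\mathbb E[\lambda^{-1}]<\infty$ and the ergodic theorem applied to $\mu|\chi_i|^2$ and $\mu|\phi_i|^2$ on balls (finiteness a.s.). For \eqref{ene:ergodic}, the ergodic theorem applied to the stationary field $x\mapsto a^\omega(\nabla\phi_i+e_i)\cdot(\nabla\phi_i+e_i)=(a(\chi_i+e_i)\cdot(\chi_i+e_i))(\tau_x\omega)$ along the balls $B_R(Rz)$ gives the limit $\mathbb E[a(\chi_i+e_i)\cdot(\chi_i+e_i)]$ for each fixed $z\in\mathbb Q^d$ on a full-measure set (countably many $z$, so intersect); and by the equation $\mathbb E[a(\chi_i+e_i)\cdot(\chi_i+e_i)]=\mathbb E[a(\chi_i+e_i)\cdot e_i]=\mathbb E[(a^T e_i)\cdot(\chi_i+e_i)]\le \mathbb E[\mu\,(a(\chi_i+e_i)\cdot(\chi_i+e_i))]^{1/2}\mathbb E[\mu]^{?}$ --- more cleanly, $\mathbb E[a(\chi_i+e_i)\cdot(\chi_i+e_i)]=\mathbb E[e_i\cdot a(\chi_i+e_i)]\le\mathbb E[|a(\chi_i+e_i)|]\le \mathbb E[(\mu\,a(\chi_i+e_i)\cdot(\chi_i+e_i))^{1/2}]\le\mathbb E[\mu]^{1/2}\mathbb E[a(\chi_i+e_i)\cdot(\chi_i+e_i)]^{1/2}$, hence $\mathbb E[a(\chi_i+e_i)\cdot(\chi_i+e_i)]\le\mathbb E[\mu]$, which is \eqref{ene:ergodic}.

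For the $L^1$-sublinearity \eqref{sublinearl1}, I would use the standard argument: set $h_i^\omega(x):=\phi_i(x,\omega)$; since $\nabla h_i^\omega$ is the stationary field $\chi_i(\tau_\cdot\omega)$ with $\mathbb E[\chi_i]=0$ (the mean of a potential field vanishes) and $\mathbb E[|\chi_i|]<\infty$, one shows $R^{-1}\fint_{B_R}|\phi_i-\fint_{B_R}\phi_i|\to0$ by a Poincar\'e-type inequality on $B_R$ rescaled to $B_1$ combined with the $L^1$-ergodic theorem for $\chi_i$, and then handles the mean $\fint_{B_R}\phi_i$ by the usual telescoping/monotone argument (this is precisely the reduction carried out in, e.g., \cite{PV79,K79} and in the degenerate setting in \cite{CD16,ADS15}; I would cite it rather than reproduce it).

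The main obstacle is the degeneracy of $\mu$: the bilinear form need not be coercive on all of $L^2(\Omega;\mu\,d\mathbb P)^d$ in the naive sense, so the Lax--Milgram step must be phrased as a minimization/projection in the weighted space $L^2_{\mathrm{pot}}$ and one must carefully check that the resulting $\chi_i$ has enough integrability ($\mathbb E[\mu|\chi_i|^2]<\infty$ \emph{and} $\mathbb E[\lambda^{-1}|a(\chi_i+e_i)|^2]<\infty$ or the analogous $\mathbb E[|\chi_i|]<\infty$) to make both the pointwise equation and the $L^1$-sublinearity meaningful --- this is exactly where Assumption~\ref{ass:prop}, via $\mathbb E[\lambda^{-q}]<\infty$ with $q>1$ and $\mathbb E[\mu^p]<\infty$, enters, guaranteeing $\mathbb E[\lambda^{-1}]<\infty$ and $\mathbb E[\mu]<\infty$ and hence the finiteness a.s.\ (through the ergodic theorem) of all the relevant quantities on every ball.
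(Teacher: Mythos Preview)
The paper does not actually prove this lemma: immediately after the statement the authors write ``We omit the proof of Lemma~\ref{L:corrector:prelim} since it is by now standard'', referring to \cite[Section~4]{CD16} for symmetric coefficients and to an additional truncation argument from \cite{AM91,FK97} for the non-symmetric case. Your outline---potential fields on the probability space, a Lax--Milgram/projection argument to produce $\chi_i$, integration to a potential $\phi_i$, the ergodic theorem for \eqref{ene:ergodic}, and the classical $L^1$-sublinearity argument citing \cite{PV79,K79,CD16,ADS15}---is exactly the route those references take, so there is nothing substantive to compare.

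Two small remarks. First, your opening mention of ``the deterministic machinery from Section~\ref{sec:bound}'' is out of place: the construction of $\phi_i$ and properties \eqref{sublinearl1}--\eqref{ene:ergodic} use only soft probabilistic input (the weighted Hilbert space and the ergodic theorem), not the Moser-type estimates; those enter only later in Proposition~\ref{P:sublinear}. Second, in the genuinely non-symmetric case the boundedness/coercivity of the bilinear form on the weighted potential space is not completely automatic, which is precisely why the paper points to the truncation argument of \cite{AM91,FK97}; you gloss over this, though you correctly identify the issue in your last paragraph. Your computation for \eqref{ene:ergodic}, namely $\mathbb E[a(\chi_i+e_i)\cdot(\chi_i+e_i)]=\mathbb E[e_i\cdot a(\chi_i+e_i)]\le\mathbb E[\mu]^{1/2}\mathbb E[a(\chi_i+e_i)\cdot(\chi_i+e_i)]^{1/2}$, is clean and correct.
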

We omit the proof of Lemma~\ref{L:corrector:prelim} since it is by now standard. In fact, if $a$ is supposed to be symmetric a stronger statement can be found in \cite[Section 4]{CD16}. Appealing to an additional truncation argument as e.g.\ in \cite{AM91,FK97} similar arguments as in \cite[Section 4]{CD16} can be used to cover also the non-symmetric case.

\smallskip

Now we state the main result of this section, namely the almost sure $L^\infty$-sublinearity of the corrector

\begin{proposition}\label{P:sublinear}
Suppose that Assumption~\ref{ass:prop} is satisfied. Then there exists $\Omega_2\subset \Omega_1$ with $\mathbb P(\Omega_2)=1$ such that the following is true: For every $\omega\in\Omega_2$ and all $i=1,\dots,d$ the functions $\phi_i \in H_{\rm loc}^1(\R^d,a^\omega)$ satisfying \eqref{eq:corrector} and \eqref{sublinearl1} are sublinear in $L^\infty$ in the sense 
\begin{equation}\label{sublinerlinfty}
 \limsup_{R\to\infty}R^{-1}\|\phi_i\|_{L^\infty(B_R)}=0.
\end{equation}
\end{proposition}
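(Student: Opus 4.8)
The plan is to deduce $L^\infty$-sublinearity from the local boundedness estimate \eqref{est:C:bound} (equivalently \eqref{est:T:bound} with $\gamma=1$) applied to the function $\phi_i$, exploiting that $\phi_i$ is $a^\omega$-harmonic on all of $\R^d$, together with the $L^1$-sublinearity \eqref{sublinearl1} and the ergodic-theoretic control \eqref{ene:ergodic}. The one genuine difficulty is that Theorem~\ref{T:1} is purely local, and its prefactor $\Lambda(B_R)^{\frac{p'}{\gamma}(1+\frac1\delta)}$ involves averages $\fint\mu^p$ and $\fint\lambda^{-q}$ over the large ball; a naive application on $B_R$ gives no decay because these averages converge (by the ergodic theorem) to $\mathbb E[\mu^p]$ and $\mathbb E[\lambda^{-q}]$, which are positive constants — so $\Lambda(B_R)$ does \emph{not} tend to zero. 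The resolution is the standard covering/rescaling trick: one does not apply local boundedness on $B_R$ itself, but on a \emph{mesoscopic} scale.

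Concretely, I would proceed as follows. \textbf{Step 1 (a full-measure set).} First restrict to a subset $\Omega_2\subset\Omega_1$ of full measure on which, in addition to the conclusions of Lemma~\ref{L:corrector:prelim}, the spatial ergodic theorem holds along balls $B_r(y)$ for all rational centers and radii, so that $\fint_{B_r(y)}\mu^p\to\mathbb E[\mu^p]$ and $\fint_{B_r(y)}\lambda^{-q}\to\mathbb E[\lambda^{-q}]$ as $r\to\infty$, uniformly (after a diagonal argument) over the relevant family of balls. On this set, for a fixed mesoscale function $r=r(R)$ with $r(R)\to\infty$ and $r(R)/R\to0$, the quantity $\sup_{|y|\le R}\Lambda(B_{r(R)}(y))$ stays bounded by a deterministic constant $\bar\Lambda=\bar\Lambda(\omega)$, because each such ball is covered by a macroscopic ball on which the ergodic averages have stabilized. \textbf{Step 2 (local boundedness on the mesoscale).} For $y\in B_R$, apply \eqref{est:T:bound} with $\gamma=1$, $\theta=\tfrac12$, and radius $2r=2r(R)$ centered at $y$ to get
\begin{equation*}
 \|\phi_i\|_{L^\infty(B_{r}(y))}\le c\,\bar\Lambda^{p'(1+\frac1\delta)}\fint_{B_{2r}(y)}|\phi_i|\le c\,\bar\Lambda^{p'(1+\frac1\delta)}\,\Big(\tfrac{2r}{R}\Big)^{-d}\,2^{d}\fint_{B_{R+2r}}|\phi_i| .
\end{equation*}
Since $R+2r\le 2R$ for $R$ large and $r/R\to0$, dividing by $R$ yields
\begin{equation*}
 \frac1R\|\phi_i\|_{L^\infty(B_R)}\le c\,\bar\Lambda^{p'(1+\frac1\delta)}\Big(\frac{R}{r}\Big)^d\cdot\frac1R\fint_{B_{2R}}|\phi_i|.
\end{equation*}

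\textbf{Step 3 (choice of scale and conclusion).} By \eqref{sublinearl1} we have $\varepsilon(R):=R^{-1}\fint_{B_{2R}}|\phi_i|\to0$. Choosing $r=r(R)$ growing slowly enough that $(R/r)^d\varepsilon(R)\to0$ — e.g.\ $r(R)=R\,\varepsilon(R)^{1/(2d)}$, which still satisfies $r(R)\to\infty$ and $r(R)/R\to0$ (so Step~1's uniform ergodic control applies) — forces $R^{-1}\|\phi_i\|_{L^\infty(B_R)}\to0$, which is exactly \eqref{sublinerlinfty}. The main obstacle, as flagged, is making Step~1 rigorous: one must arrange a \emph{single} null set off which the ergodic averages $\fint_{B_{r}(y)}\mu^p$ and $\fint_{B_r(y)}\lambda^{-q}$ are simultaneously controlled for \emph{all} centers $y$ in the growing balls and \emph{all} mesoscales $r$ along the chosen sequence; this is handled by the Wiener-type covering argument (cover $B_{r}(y)$ by finitely many balls from a countable dictionary and use that $\fint_{B}\mu^p\lesssim\fint_{B'}\mu^p$ when $B\subset B'$ with comparable radii) combined with the fact that the bad set for each fixed rational ball has measure zero. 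Everything else is a routine rescaling of the deterministic Theorem~\ref{T:1}.
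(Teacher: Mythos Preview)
Your proposal has a genuine gap at the very first step: you apply the local boundedness estimate \eqref{est:T:bound} to $\phi_i$, asserting that ``$\phi_i$ is $a^\omega$-harmonic on all of $\R^d$''. This is false. The corrector equation \eqref{eq:corrector} reads $\nabla\cdot a^\omega(\nabla\phi_i+e_i)=0$, so the function that is $a^\omega$-harmonic is $x\mapsto\phi_i(x)+x_i$ (or any translate $\phi_i(x)+e_i\cdot(x-z)$), not $\phi_i$ itself. Viewed as an equation for $\phi_i$ alone there is a right-hand side $-\nabla\cdot a^\omega e_i$, and Theorem~\ref{T:1} as stated says nothing about such solutions. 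The paper flags exactly this obstruction at the start of its proof.

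The fix is what the paper actually does: on each mesoscopic ball centered at $Rz$ apply \eqref{est:C:bound} to $u_i^z(x):=\phi_i(x)+e_i\cdot(x-Rz)$, which \emph{is} $a^\omega$-harmonic. The linear piece satisfies $|e_i\cdot(x-Rz)|\lesssim\rho R$ on a ball of radius $\sim\rho R$, so passing between $u_i^z$ and $\phi_i$ costs only an additive $\rho R$. After dividing by $R$ this contributes $O(\rho)$, and the paper concludes by sending $\rho\to0$ \emph{after} $R\to\infty$. Note that this two-parameter argument (fixed ratio $\rho$, then $\rho\downarrow0$) also cleanly sidesteps the issues in your Step~1: the centers lie in the finite set $R\cdot(\rho\mathbb Z^d\cap B_1)$, so ergodic convergence is needed only along finitely many rational directions and no uniform-in-center statement or adaptive scale $r(R)$ is required. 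Your choice $r(R)=R\,\varepsilon(R)^{1/(2d)}$ is in any case not guaranteed to tend to infinity, since nothing prevents $\varepsilon(R)$ from decaying faster than any negative power of $R$. Finally, the borderline case $d=2$, $p=q=1$ (allowed by Assumption~\ref{ass:prop}) falls outside Theorem~\ref{T:1} and is treated separately in the paper via Proposition~\ref{P:bound:2d} together with \eqref{ene:ergodic}; your sketch does not cover it.
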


\begin{remark}
In \cite{CD16}, the sublinearity of the corrector in the form \eqref{sublinerlinfty} is shown under moment conditions \eqref{ass:moment} with the more restrictive relation $\frac1p+\frac1q<\frac2d$ (see also \cite{ADS15} for a similar result in the discrete setting and \cite{FK97} for a corresponding statements with strictly elliptic, unbounded coefficients). In two dimensions, Proposition~\ref{P:sublinear} might be not surprising since an analogous statement in a discrete setting was already proven by Biskup in \cite{Biskup}. For completeness and since the argument in \cite{Biskup} uses the discrete structure we include this case here. The counterexample to local boundedness in \cite{FSS98} suggests that Assumption~\ref{ass:prop} should be almost optimal for the conclusion of Proposition~\ref{P:sublinear}. In fact it is recently shown by Biskup and Kumagai \cite{BK14} in a discrete setting, that the corresponding statement of Proposition~\ref{P:sublinear} fails if \eqref{ass:moment} only holds for $p,q$ satisfying $\frac1p+\frac1q>\frac{2}{d-1}$ provided $d\geq4$.
\end{remark}

\begin{proof}[Proof of Proposition~\ref{P:sublinear}]

Throughout the proof we write $\lesssim$ if $\leq$ holds up to a positive constant which depends only on $d,p$ and $q$. Before we give the details of the proof, we briefly explain the idea. There are two obstructions to deduce the statement directly from Theorem~\ref{T:1}: Firstly, we are not able to prove local boundedness of the corrector by considering \eqref{eq:corrector} as an equation for $\phi_i$ with the right-hand side $\nabla \cdot ae_i$ as it is e.g.\ done in \cite{CD16}. Secondly and more severe, the right-hand side $\nabla \cdot ae_i$ is not small in general. We overcome this issues by appealing to a two-scale argument: We introduce an additional length-scale $\rho R$ with $0<\rho\ll1$ and compare $\phi_i$ on balls with radius $\sim \rho R$ with $a$-harmonic functions $\phi_i+e_i\cdot x-c$ with a suitable chosen $c\in \R$. Using the $L^1$-sublinearity of $\phi_i$ and the fact that the linear part coming from $e_i\cdot x$ can be controlled by $\rho>0$ on each ball of radius $\sim \rho R$ we obtain the desired claim. 

\step 1 As a preliminarily step, we recall the needed input from ergodic theory. In view of the spatial ergodic theorem, we obtain from the moment condition \eqref{ass:moment} that there exists $\Omega'\subset \Omega$ with $\mathbb P(\Omega')=1$  such that for $\omega\in\Omega'$ it holds $\lambda(\tau_{(\cdot)}\omega)^{-1}\in L_{\rm loc}^q(\R^d)$, $\mu(\tau_{(\cdot)}\omega)\in L_{\rm loc}^p(\R^d)$, and for every $z\in\mathbb Q^d$
\begin{equation}\label{eq:ergodic}
 \lim_{R\to\infty}\fint_{B_R(Rz)}\lambda(\tau_x \omega)^{-q}\,dx=\mathbb E[\lambda^{-q}] \quad\mbox{and}\quad \lim_{R\to\infty}\fint_{B_R(Rz)}\mu(\tau_x \omega)^{p}\,dx=\mathbb E[\mu^{p}],
\end{equation}
see e.g.\ \cite[Theorem~11.18]{KLO}.

\step 2 Conclusion for $p,q>1$. We set $\Omega_2:=\Omega_1\cap \Omega'$, where $\Omega'$ is given as in Step~1. Clearly $\Omega_2$ has full measure. From now on we fix $\omega\in\Omega_2$. 

\smallskip

Fix $\rho\in(0,\frac{R}2]$  and cover $B_R$ with finitely many balls $B_{d \rho R }(Rz)$, $z\in \rho \mathbb Z^d\cap B_1=:\mathcal Z_{\rho}$. For $z\in \mathcal Z_{\rho}$, set $u_i^z(x):=\phi_i(x)+e_i\cdot(x-z)$. Obviously, \eqref{eq:corrector} implies that $u_i^z$ is $a^\omega$-harmonic. Hence, \eqref{est:C:bound} (with $\gamma=1$) yields
\begin{align}\label{moser:ui}
\|u_i^z\|_{L^\infty(B_{d \rho R }(Rz))}\lesssim& \Lambda(B_{2d \rho R }(Rz))^{p'(1+\frac1\delta)}\fint_{B_{2d \rho R }(Rz)}|u_i^z|\,dx\notag\\
\lesssim&\Lambda(B_{2d \rho R }(Rz))^{p'(1+\frac1\delta)}\left(\fint_{B_{2d \rho R }(Rz)}|\phi_i(x)|\,dx+\rho R\right),
\end{align}
where $p=\frac{p}{p-1}$, $\delta=\min\{\frac1{d-1}-\frac1{2p},\frac12\}-\frac1{2q}>0$. Estimate \eqref{moser:ui} implies the following $L^\infty$ estimate on $\phi_i$
\begin{align}\label{est:phiexi:1}
 \|\phi_i\|_{L^\infty(B_R)}\leq \sup_{z\in \mathcal Z_{\rho}}\|\phi_i\|_{L^\infty(B_{d\rho R}(Rz))}&\leq  \sup_{z\in \mathcal Z_{\rho}}\|u_i\|_{L^\infty(B_{d\rho R}(Rz))}+d\rho R\notag\\
 &\lesssim \sup_{z\in \mathcal Z_{\rho}}\Lambda(B_{2d\rho R}(Rz)))^{p'(1+\frac1\delta)}\left(\fint_{B_{2d\rho R}(Rz))}|\phi_i|\,dx+\rho R\right)+\rho R\notag\\
 &\lesssim (\rho^{-d}\fint_{B_{2d R}}|\phi_i(x)|\,dx+\rho R)\sup_{z\in \mathcal Z_{\rho}}\Lambda(B_{2d\rho R}(Rz)))^{p'(1+\frac1\delta)}+\rho R.
\end{align}
Clearly for every $z\in \mathcal Z_\rho$, it holds $\Lambda(B_{2d\rho R}(Rz))=\Lambda(B_{R'}(R'z'))$ with $R'=2d\rho R$ and $z'=(2d\rho)^{-1}z\in (2d)^{-1}\mathbb Z^d\cap B_{(2d\rho)^{-1}}=:\mathcal Z'_\rho$. Using \eqref{eq:ergodic} and the fact that $\mathcal Z_{\rho}$ (and thus $\mathcal Z_\rho'$) is a finite set, we obtain
\begin{align}\label{limsupLambdae}
\limsup_{R\to\infty}\sup_{z\in \mathcal Z_{\rho}}\Lambda(B_{2d\rho R}(Rz))=&\limsup_{R\to\infty}\sup_{z'\in \mathcal Z'_{\rho}}\left(\fint_{B_R(Rz')}\lambda(\tau_x \omega)^{-q}\,dx\right)^\frac1q\left(\fint_{B_R(Rz')}\mu(\tau_x \omega)^{p}\,dx\right)^\frac1p\notag\\
\leq& \mathbb E[\lambda^{-q}]^\frac1q\mathbb E[\mu^p]^\frac1p.
\end{align}
Finally, we combine \eqref{est:phiexi:1} and \eqref{limsupLambdae} with the $L^1$-sublinearity of $\phi_i$, i.e.\ \eqref{sublinearl1}, to obtain
\begin{equation*}
 \limsup_{R\to\infty}R^{-1}\|\phi_i\|_{L^\infty(B_R)}\lesssim\rho(\mathbb E[\lambda^{-q}]^\frac1q\mathbb E[\mu^p]^\frac1p)^{p'(1+\frac1\delta)}+\rho.
\end{equation*}
The arbitrariness of $\rho>0$ implies \eqref{sublinerlinfty} and finishes the proof.

\step 3 The remaining case: $d=2$ and $p=q=1$. Let $\Omega_2$ be as in Step~2. From now on we fix $\omega\in\Omega_2$ and use the same notation as in Step~2.   

\smallskip

Using estimate \eqref{est:P:bound2d} instead of \eqref{est:C:bound}, we obtain 
\begin{align}\label{moser:ui:2d}
&\|u_i^z\|_{L^\infty(B_{2 \rho R }(Rz))}\notag\\
\lesssim&\fint_{B_{4 \rho R }(Rz)}|\phi_i(x)|\,dx+\rho R+\rho R\left(\fint_{B_{4 \rho R }(Rz)}\lambda^{-1}\right)^\frac12\left(\fint_{B_{4 \rho R }(Rz)}a(\nabla \phi_i+e_i)\cdot (\nabla \phi_i+e_i)\,dx\right)^\frac12,
\end{align}
instead of \eqref{moser:ui}. Estimate \eqref{moser:ui:2d} implies the following $L^\infty$ estimate on $\phi_i$

\begin{align}\label{est:phiexi:1:2d}
 &\|\phi_i\|_{L^\infty(B_R)}\leq \sup_{z\in \mathcal Z_{\rho}}\|\phi_i\|_{L^\infty(B_{2\rho R}(Rz))}\leq  \sup_{z\in \mathcal Z_{\rho}}\|u_i\|_{L^\infty(B_{2\rho R}(Rz))}+2\rho R\notag\\
 \lesssim& \sup_{z\in \mathcal Z_{\rho}}\left(\fint_{B_{4 \rho R }(Rz)}|\phi_i|+\rho R+\rho R\left(\fint_{B_{4 \rho R }(Rz)}\lambda^{-1}\right)^\frac12\left(\fint_{B_{4 \rho R }(Rz)}a(\nabla \phi_i+e_i)\cdot (\nabla \phi_i+e_i)\right)^\frac12\right)\notag\\
 \lesssim& \rho^{-d}\fint_{B_{2d R}}|\phi_i|+\rho R+\rho R\sup_{z\in \mathcal Z_{\rho}}\left(\fint_{B_{4 \rho R }(Rz)}\lambda^{-1}\right)^\frac12\left(\fint_{B_{4 \rho R }(Rz)}a(\nabla \phi_i+e_i)\cdot (\nabla \phi_i+e_i)\right)^\frac12.
\end{align}
Using \eqref{ene:ergodic} and \eqref{eq:ergodic}, we obtain similar to \eqref{limsupLambdae}
\begin{align}\label{limsupLambdae2d}
\limsup_{R\to\infty}\sup_{z\in \mathcal Z_{\rho}}\left(\fint_{B_{4 \rho R }(Rz)}\lambda^{-1}\right)^\frac12\left(\fint_{B_{4 \rho R }(Rz)}a(\nabla \phi_i+e_i)\cdot (\nabla \phi_i+e_i)\right)^\frac12
\leq& \mathbb E[\lambda^{-1}]^\frac12\mathbb E[\mu]^\frac12.
\end{align}
Finally, we combine \eqref{est:phiexi:1:2d} and \eqref{limsupLambdae2d} with the $L^1$-sublinearity of $\phi_i$, i.e.\ \eqref{sublinearl1}, to obtain
\begin{equation*}
 \limsup_{R\to\infty}R^{-1}\|\phi_i\|_{L^\infty(B_R)}\lesssim\rho\mathbb E[\lambda^{-1}]^\frac12\mathbb E[\mu]^\frac12+\rho.
\end{equation*}
The arbitrariness of $\rho>0$ implies \eqref{sublinerlinfty} and finishes the proof.

\end{proof}

\end{document}